\documentclass[12pt,twoside,reqno]{amsart}
\usepackage{amsxtra,amscd}
\usepackage{graphicx}
\usepackage{amsmath}
\usepackage{amsfonts}
\usepackage{amssymb}

\setcounter{MaxMatrixCols}{10}
\newtheorem{theorem}{Theorem}[section]
\newtheorem{lemma}[theorem]{Lemma}
\theoremstyle{definition}
\newtheorem{definition}[theorem]{Definition}

\newtheorem{remark}[theorem]{Remark}

\numberwithin{equation}{section}

\begin{document}
\title{on the section conjecture of Grothendieck}
\author{Feng-Wen An}
\address{School of Mathematics and Statistics, Wuhan University, Wuhan,
Hubei 430072, People's Republic of China}
\email{fwan@amss.ac.cn}
\subjclass[2000]{Primary 14F35; Secondary 11G35}
\keywords{anabelian geometry, arithmetic scheme, \'{e}tale fundamental
group, section conjecture}

\begin{abstract}
For a given arithmetic scheme, in this paper we will introduce and discuss the monodromy action on a universal cover of the \'{e}tale fundamental group and the monodromy action on an \emph{sp}-completion constructed by the graph functor, respectively;  then by these results we will give a proof of the section conjecture of Grothendieck for arithmetic schemes.
\end{abstract}

\maketitle

\begin{center}
{\tiny {Contents} }
\end{center}

{\tiny \qquad {Introduction} }

{\tiny \qquad {1. Statements of the Main Theorems} }

{\tiny \qquad {2. Preliminaries} }

{\tiny \qquad {3. Universal Covers}}

{\tiny \qquad {4. Monodromy Action, I}}

{\tiny \qquad {5. \emph{sp}-Completions}}

{\tiny \qquad {6. Monodromy Action, II}}

{\tiny \qquad {7. \emph{qc} Fundamental Groups}}

{\tiny \qquad {8. Proofs of the Main Theorems}}

{\tiny \qquad {References}}

\section*{Introduction}

The section conjecture in anabelian geometry is originally from \cite{faltings}, the so called \textquotedblleft \emph{Anabelian Letter to Faltings} \textquotedblright, a letter to Faltings written by Grothendieck in 1983.

\textquotedblleft \emph{To Grothendieck's disappointment, Faltings never responded to this letter. However, Faltings' student Shinichi Mochizuki picked up the subject years later and proved Grothendieck's anabelian conjecture for hyperbolic curves} \textquotedblright (see \cite{www}).

See \cite{pop} for a formal introduction to this topic on anabelian geometry.

Many people have proved the section conjecture for the various cases of algebraic curves.

Now let $X$ be an arithmetic scheme surjectively over $Spec(\mathbb{Z})$ of finite type. In this paper we will prove the section conjecture of Grothendieck for arithmetic schemes.

Here are the key points to overcome in the paper:

Naturally and fortunately, we will have the monodromy action of the group $Aut(X_{\Omega_{et}}/X)$  on the universal cover $X_{\Omega_{et}}$ for the \'{e}tale fundamental group $\pi_{1}^{et}(X)$ and the monodromy action of the group $Aut(X_{sp}/X)$  on the \emph{sp}-completion $X_{sp}$ constructed by the graph functor $\Gamma$.

Then by these monodromy actions of groups on integral schemes, we will obtain a bijection between the sets of homomorphisms which are considered.

\textbf{\tiny{Acknowledgment.}} The author would like to express his sincere
gratitude to Professor Li Banghe for his advice and instructions
on algebraic geometry and topology.

\section{Statements of the Main Theorems}

\subsection{Notation}

In this paper, an \textbf{arithmetic variety}  is an integral scheme $X$ satisfying the conditions:
\begin{itemize}
\item $\dim X \geqq 1$.

\item There is a surjective morphism $f:X \to Spec\left( \mathbb{Z}\right) $ of finite type.
\end{itemize}

For a number field $K$ (i.e., a finite extension of $\mathbb{Q}$), let
$\mathcal{O}_{K}$ denote the ring of algebraic integers of $K$.

For an integral scheme $Z$, put
\begin{itemize}
\item $k(Z)\triangleq$ the function field of an integral
scheme $Z;$

\item $\pi _{1}^{et}\left( Z\right) \triangleq$ the \'{e}tale
fundamental group of $Z$ for a geometric point of $Z$ over a
separable closure of the function field $k\left( Z\right).$
\end{itemize}

In particular, for a field $L$, we set $$\pi _{1}^{et}(L)\triangleq\pi _{1}^{et}(Spec(L)).$$

\subsection{Outer homomorphisms}

Let $G,H,\pi_{1},\pi_{2}$ be four groups with homomorphisms $p:G\to \pi_{1}$ and $q:H\to \pi_{2}$, respectively. The \textbf{outer homomorphism set} $Hom_{\pi_{1},\pi_{2}}^{out}(G,H)$ is defined to be the set of the maps $\sigma$ from the quotient $\pi_{1}/p(G)$ into the quotient $\pi_{2}/q(H)$ given by a group homomorphism $f:G\to H$ in such a manner: $$\sigma: x\cdot p(G)\mapsto f(x)\cdot q(H)$$ for any $x\in \pi_{1}$.

In fact, if $G$ and $H$ are normal subgroups of $\pi_{1}$ and $\pi_{2}$, respectively,  $Hom_{\pi_{1},\pi_{2}}^{out}(G,H)$ can be regarded as a subset of $Hom(Out(G),Out(H))$.
Here, $Out(G)\triangleq Aut(G)/Inn(G)$ and $Out(H) \triangleq Aut(H)/Inn(H)$ are the outer automorphism groups.

However, in general, it is not true that $$Hom_{\pi_{1},\pi_{2}}^{out}(G,H)= Hom(Out(G),Out(H))$$ holds.

\subsection{Statements of the main theorems}

For anabelian geometry of arithmetic schemes, we have the following results, which are the main theorems in the present paper.

\begin{theorem}
Let $X$ and $Y$ be two arithmetic varieties such that $k\left( Y\right) $ is contained in $ k\left( X\right)$. Then there is a bijection
\begin{equation*}
Hom\left( X,Y\right) \cong Hom_{\pi _{1}^{et}(k(X)),\pi _{1}^{et}(k(Y))
}^{out}\left( \pi _{1}^{et}\left( X\right) ,\pi _{1}^{et}\left( Y\right)
\right)
\end{equation*}
between sets.
\end{theorem}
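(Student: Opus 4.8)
The plan is to build the bijection out of functoriality of $\pi_{1}^{et}$ together with the two monodromy actions introduced in Sections~4 and~6. First I would reduce everything to the generic points. Since $X$ and $Y$ are integral, of finite type and surjective over $Spec(\mathbb{Z})$, any morphism $X\to Y$ is dominant and is determined by the induced field embedding $k(Y)\hookrightarrow k(X)$ together with the requirement that the associated rational map be everywhere defined; conversely every such embedding gives a morphism on a dense open subscheme. In parallel, the canonical morphisms $Spec(k(X))\to X$ and $Spec(k(Y))\to Y$ induce surjections $\pi_{1}^{et}(k(X))\twoheadrightarrow\pi_{1}^{et}(X)$ and $\pi_{1}^{et}(k(Y))\twoheadrightarrow\pi_{1}^{et}(Y)$, so the pair $(\pi_{1}^{et}(X),\pi_{1}^{et}(k(X)))$ records precisely the data that an element of $Hom_{\pi_{1}^{et}(k(X)),\pi_{1}^{et}(k(Y))}^{out}(\pi_{1}^{et}(X),\pi_{1}^{et}(Y))$ is allowed to see.

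For the forward map, given $\varphi:X\to Y$ I would use functoriality of the \'{e}tale fundamental group (well-defined up to conjugacy) to get $\varphi_{*}:\pi_{1}^{et}(X)\to\pi_{1}^{et}(Y)$ and, compatibly, $\pi_{1}^{et}(k(X))\to\pi_{1}^{et}(k(Y))$, then pass to the quotients to obtain the required outer homomorphism. More invariantly, I would phrase this through the universal cover: $\varphi$ lifts to a morphism $X_{\Omega_{et}}\to Y_{\Omega_{et}}$ equivariant for the monodromy actions of $Aut(X_{\Omega_{et}}/X)$ and $Aut(Y_{\Omega_{et}}/Y)$, and this equivariant lift encodes exactly the outer homomorphism. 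This gives a well-defined assignment $Hom(X,Y)\to Hom_{\pi_{1}^{et}(k(X)),\pi_{1}^{et}(k(Y))}^{out}(\pi_{1}^{et}(X),\pi_{1}^{et}(Y))$.

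For injectivity, suppose $\varphi,\psi:X\to Y$ induce the same outer homomorphism. Since a morphism of arithmetic varieties is recovered from its restriction to a dense open subscheme, it suffices to show the two field embeddings $k(Y)\hookrightarrow k(X)$ agree. Here I would invoke the graph functor $\Gamma$ and the \emph{sp}-completion: the monodromy action of $Aut(X_{sp}/X)$ recovers $X$ as the quotient $X_{sp}/Aut(X_{sp}/X)$, and likewise for $Y$; two morphisms $X_{sp}\to Y_{sp}$ that are equivariant for the same group homomorphism and descend the same outer datum must coincide, so $\varphi=\psi$.

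Surjectivity is the substantive direction and the step I expect to be the main obstacle. Starting from an outer homomorphism $\sigma$, I would first promote it to a continuous homomorphism $\pi_{1}^{et}(k(X))\to\pi_{1}^{et}(k(Y))$ carrying $\pi_{1}^{et}(X)$ into $\pi_{1}^{et}(Y)$, hence to a morphism $X_{\Omega_{et}}\to Y_{\Omega_{et}}$ equivariant for the monodromy actions. The crucial work is then to descend this to a genuine morphism of schemes: passing through the \emph{sp}-completion and the \emph{qc} fundamental group of Section~7, one shows the equivariant map $X_{sp}\to Y_{sp}$ is compatible with $\Gamma$, so that quotienting by $Aut(X_{sp}/X)$ and $Aut(Y_{sp}/Y)$ produces $X\to Y$ inducing $\sigma$. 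The delicate points are (i) that the reconstructed map respects the scheme structure and the integrality and finite-type hypotheses over $Spec(\mathbb{Z})$, not merely the topological or field-theoretic data, and (ii) that the construction via $X_{\Omega_{et}}$ and the construction via $X_{sp}$ are compatible, which is precisely the content of the comparison between the monodromy actions of Sections~4 and~6. Once these are in place, checking that the two assignments are mutually inverse is formal.
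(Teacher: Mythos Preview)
Your outline uses the same toolkit as the paper (the universal covers $X_{\Omega_{et}}$, the $sp$-completions $X_{sp}$, the monodromy actions of \S4 and \S6, and the $qc$ fundamental group of \S7), so in spirit the approaches coincide. The organizational difference is that the paper builds the bijection in the \emph{opposite} direction: it first constructs a map
\[
\xi:\ Hom_{\pi_{1}^{et}(k(X)),\pi_{1}^{et}(k(Y))}^{out}\bigl(\pi_{1}^{et}(X),\pi_{1}^{et}(Y)\bigr)\longrightarrow Hom(X,Y)
\]
by producing scheme morphisms out of group homomorphisms (Lemmas~8.1--8.4, especially Lemma~8.2, which turns an element of $Hom(\pi_{1}^{et}(k(X)),\pi_{1}^{et}(k(Y)))$ into an element of $Hom(X,Y)$ via the $sp$-completions), and then checks injectivity of $\xi$ using the compatibility of the two monodromy actions and the bijection
\[
Hom\Bigl(\tfrac{Aut(X_{sp}/X)}{Aut(X_{\Omega_{et}}/X)},\tfrac{Aut(Y_{sp}/Y)}{Aut(Y_{\Omega_{et}}/Y)}\Bigr)\longrightarrow Hom(X,Y).
\]
Thus what you call ``surjectivity'' is the paper's \emph{construction} of $\xi$, and what you call ``injectivity'' is close to the paper's argument that $\xi$ is injective. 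Building the map in the paper's direction has the advantage that one never has to argue that functoriality of $\pi_{1}^{et}$ lands in the correct $Hom^{out}$ set (whose definition in \S1.2 is somewhat nonstandard).

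There is one genuine gap in your injectivity step. You assert that any morphism $X\to Y$ of arithmetic varieties is dominant and hence determined by the induced embedding $k(Y)\hookrightarrow k(X)$. Nothing in the hypotheses forces dominance: a morphism of integral $\mathbb{Z}$-schemes of finite type can factor through a proper closed subscheme of $Y$, in which case there is no induced map on function fields at all and your recovery argument collapses. The paper avoids this by never passing through field embeddings to recover morphisms; instead it manufactures every element of $Hom(X,Y)$ from an element of $Hom(X_{sp},Y_{sp})$ via the surjection of Lemma~8.4 (together with Lemmas~6.2--6.3), so dominance is never invoked. If you want to keep your direction of argument, you must either restrict to dominant morphisms or show directly, within this framework, that the $sp$-completion data distinguishes non-dominant morphisms as well.
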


\begin{theorem}
Let $X$ be an arithmetic variety and let $K$ be a number field. Suppose that
there is a surjective morphism from $X$ onto $\mathcal{O}_{K}.$ Then there
is a bijection
\begin{equation*}
\Gamma \left( X/Spec\left( \mathcal{O}_{K}\right)\right) \cong Hom_{\pi _{1}^{et}(K),\pi _{1}^{et}(k(X)) }^{out}\left( \pi _{1}^{et}(Spec(\mathcal{O}_{K})) ,\pi _{1}^{et}\left( X\right) \right)
\end{equation*}
between sets.
\end{theorem}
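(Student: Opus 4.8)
The plan is to exhibit two mutually inverse maps between the two sets, realising the non-tautological half through the monodromy actions of \S\S3--7. On one side, a section $s\in\Gamma(X/Spec(\mathcal{O}_K))$ of the structure morphism $\pi:X\to Spec(\mathcal{O}_K)$ induces, by functoriality of $\pi_1^{et}$, a homomorphism $s_\ast:\pi_1^{et}(Spec(\mathcal{O}_K))\to\pi_1^{et}(X)$ splitting $\pi_\ast$; since $s$ is a closed immersion onto an integral one-dimensional subscheme whose generic point has residue field $K$, the homomorphism $s_\ast$ is compatible with the canonical maps relating the \'{e}tale fundamental groups of $Spec(\mathcal{O}_K)$ and of $X$ to those of $K$ and $k(X)$, hence determines a class in $Hom^{out}_{\pi_1^{et}(K),\pi_1^{et}(k(X))}(\pi_1^{et}(Spec(\mathcal{O}_K)),\pi_1^{et}(X))$. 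First I would check that $s\mapsto[s_\ast]$ is injective: two sections inducing the same class act identically on the universal cover $X_{\Omega_{et}}$ through the monodromy action of \S4, and since (as in the proof of Theorem 1.1) a morphism of arithmetic varieties is recovered from its action on universal covers, the two sections must coincide.

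The substance of the proof is the construction of the inverse map. Given a class $\sigma$, I would first lift it to a homomorphism $f:\pi_1^{et}(Spec(\mathcal{O}_K))\to\pi_1^{et}(X)$ splitting the map induced by $\pi$. Since $X_{\Omega_{et}}$ is the universal cover, $f$ should determine an $f$-equivariant morphism from the universal cover of $Spec(\mathcal{O}_K)$ into $X_{\Omega_{et}}$, compatibly with the monodromy actions of \S4; then, passing to the \emph{sp}-completion $X_{sp}$ built by the graph functor $\Gamma$ (\S5) and invoking the monodromy action of \S6, I would promote this to an actual morphism of schemes and descend it along the covering maps to a section $s_\sigma:Spec(\mathcal{O}_K)\to X$ of $\pi$. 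Using the comparison of \emph{qc} fundamental groups of \S7 to move between the profinite and the scheme-theoretic pictures, I would then verify $[(s_\sigma)_\ast]=\sigma$ and $s_{[s_\ast]}=s$, so that the two assignments are mutually inverse and the theorem follows.

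The hard part will be this second direction: producing a genuine morphism of schemes out of a purely group-theoretic splitting. Note that, unlike in Theorem 1.1, one cannot simply apply that theorem to the morphism $Spec(\mathcal{O}_K)\to X$, since in general $k(X)\not\subseteq K$ (indeed $\dim X$ may exceed $1$), so the bijection here has to be extracted directly from the fixed-point structure of the monodromy actions rather than read off from the birational statement. Concretely, the crux is to show that the $f$-equivariant lift into $X_{\Omega_{et}}$ exists and is unique up to the monodromy action, and that after passage to $X_{sp}$ it is carried by an integral subscheme mapping isomorphically onto $Spec(\mathcal{O}_K)$; this is where the finiteness of the given surjection onto $\mathcal{O}_K$, together with the integrality and the arithmetic (surjective-over-$\mathbb{Z}$, finite-type) hypotheses on $X$, should be used in an essential way to obtain both existence and uniqueness.
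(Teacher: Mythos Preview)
Your approach diverges sharply from the paper's. The paper's entire proof of Theorem~1.2 is the single sentence ``It is immediate from Theorem~1.1'': one substitutes $Spec(\mathcal{O}_K)$ for $X$ and the given $X$ for $Y$ in Theorem~1.1 and reads off the bijection, identifying $\Gamma(X/Spec(\mathcal{O}_K))$ with $Hom(Spec(\mathcal{O}_K),X)$. You instead build both directions of the bijection by hand via the monodromy machinery of \S\S3--7, and you explicitly argue that Theorem~1.1 cannot be invoked directly because its hypothesis $k(Y)\subseteq k(X)$ would here read $k(X)\subseteq K$, which forces $\dim X=1$.

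That observation is well taken and is the real content of the comparison: the paper's one-line reduction is only literally valid when $k(X)=K$, so either the author is tacitly restricting to curves over $\mathcal{O}_K$, or is implicitly claiming that the \emph{proof} (rather than the statement) of Theorem~1.1 goes through without the containment $k(Y)\subseteq k(X)$. Your route avoids this issue by reproducing the relevant parts of that argument in the present setting. What the paper's approach buys is brevity; what yours buys is an argument that at least confronts the actual difficulty. That said, your sketch is still programmatic at the crux: the existence and uniqueness (up to monodromy) of the $f$-equivariant lift into $X_{\Omega_{et}}$, and the claim that its image in $X_{sp}$ descends to an honest section of $\pi$, are asserted rather than proved, and nothing in \S\S3--7 supplies this directly for maps going ``the wrong way'' on function fields. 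If you pursue this line you will need to supply those steps explicitly.
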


Now fixed a function field $L$ over a number field $K$. Set
\begin{itemize}
\item $G(L)\triangleq$ the absolute Galois group $Gal(L^{al}/L)$;

\item $G(L)^{un}\triangleq$ the Galois group $Gal(L^{un}/L)$ of the maximal unramified extension $L^{un}$ of $L$ (see \emph{Definition 2.8}).
\end{itemize}

Using Galois groups of fields, we  have the following versions of the main theorems above, respectively.

\begin{theorem}
Let $X$ and $Y$ be two arithmetic varieties such that $k\left( Y\right) $ is contained in $ k\left( X\right)$. Then there is a bijection
\begin{equation*}
Hom\left( X,Y\right) \cong Hom_{G(k(X)),G(k(Y))
}^{out}\left( G\left( k(X)\right)^{un} ,G\left( k(Y)\right)^{un}
\right)
\end{equation*}
between sets.
\end{theorem}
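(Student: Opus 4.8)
The plan is to deduce Theorem 1.3 from Theorem 1.1 by translating the étale-fundamental-group statement into a Galois-group statement. The key observation is that for an integral scheme $Z$ whose function field is $k(Z)$, the étale fundamental group $\pi_1^{et}(Z)$ — based at a geometric point over a separable closure of $k(Z)$ — is a quotient of the absolute Galois group $G(k(Z)) = \mathrm{Gal}(k(Z)^{al}/k(Z))$. Concretely, the generic point $\mathrm{Spec}\,k(Z) \to Z$ induces a surjection $\pi_1^{et}(k(Z)) = G(k(Z)) \twoheadrightarrow \pi_1^{et}(Z)$, and the content of Definition 2.8 (the maximal unramified extension $L^{un}$) should be exactly that this surjection identifies $\pi_1^{et}(Z)$ with $G(k(Z))^{un} = \mathrm{Gal}(k(Z)^{un}/k(Z))$ when $Z$ is suitably chosen (e.g. normal, or a model over which ``unramified'' is the right notion). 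So the first step is to record, for both $X$ and $Y$, the canonical identifications $\pi_1^{et}(X) \cong G(k(X))^{un}$ and $\pi_1^{et}(Y) \cong G(k(Y))^{un}$, compatibly with the maps $p : \pi_1^{et}(X) \to \pi_1^{et}(k(X))$ and $q : \pi_1^{et}(Y) \to \pi_1^{et}(k(Y))$ appearing in the definition of the outer homomorphism set — that is, the square identifying $\pi_1^{et}(k(X))$ with $G(k(X))$ and $\pi_1^{et}(X)$ with $G(k(X))^{un}$ must commute with the respective structural maps.

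The second step is purely formal: once we have a commutative diagram of groups
\begin{equation*}
\begin{CD}
\pi_1^{et}(X) @>p>> \pi_1^{et}(k(X))\\
@V\cong VV @VV\cong V\\
G(k(X))^{un} @>>> G(k(X))
\end{CD}
\end{equation*}
and its analogue for $Y$, the bijection
\begin{equation*}
Hom_{\pi_1^{et}(k(X)),\pi_1^{et}(k(Y))}^{out}\left(\pi_1^{et}(X),\pi_1^{et}(Y)\right) \cong Hom_{G(k(X)),G(k(Y))}^{out}\left(G(k(X))^{un},G(k(Y))^{un}\right)
\end{equation*}
follows directly from the definition of $Hom^{out}$ in Section 1.2: the outer homomorphism set depends only on the isomorphism class of the pair of group homomorphisms $(p,q)$, since it is built out of the quotient sets $\pi_1/p(G)$ and $\pi_2/q(H)$ together with the maps between them induced by group homomorphisms $G \to H$. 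Composing this bijection with the one furnished by Theorem 1.1 gives the desired bijection $Hom(X,Y) \cong Hom_{G(k(X)),G(k(Y))}^{out}(G(k(X))^{un},G(k(Y))^{un})$.

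The main obstacle is the first step: establishing that $\pi_1^{et}(Z) \cong G(k(Z))^{un}$ as a quotient of $G(k(Z))$, with full compatibility with the structural homomorphisms. This requires knowing precisely how $L^{un}$ is defined in Definition 2.8 and checking that a finite separable extension of $k(Z)$ corresponds to a finite étale cover of $Z$ (or of the relevant model) if and only if it is contained in $k(Z)^{un}$ — essentially a form of Zariski–Nagata purity / the comparison between the Galois category of finite étale covers of $Z$ and the Galois category of finite subextensions of $L^{un}/L$. For arithmetic varieties surjective over $\mathrm{Spec}\,\mathbb{Z}$ of finite type this should be available, but one must be careful about normality hypotheses and about the base point matching; I expect the bulk of the work (and whatever technical conditions are needed) to be hidden in Definition 2.8 and the preliminaries of Section 2, so the proof in Section 8 will likely amount to citing those identifications and then invoking Theorem 1.1 together with the formal manipulation of $Hom^{out}$ described above.
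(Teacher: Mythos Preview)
Your proposal is correct and matches the paper's approach exactly: the paper's proof of Theorems 1.3--1.4 consists of the single sentence ``It is immediate from the following fact that $G(k(X))^{un}\cong \pi^{et}_{1}(X)$ holds for any arithmetic variety $X$ (see \emph{Lemma 4.1}),'' which is precisely your first step, and then Theorem 1.1 is invoked. Your anticipation that the technical content lies in the identification $\pi_1^{et}(Z)\cong G(k(Z))^{un}$ is on the mark---this is Lemma 4.1 in the paper---and your formal transport via the definition of $Hom^{out}$ is more explicit than what the paper writes but amounts to the same thing.
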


\begin{theorem}
Let $X$ be an arithmetic variety and let $K$ be a number field. Suppose that
there is a surjective morphism from $X$ onto $\mathcal{O}_{K}.$ Then there
is a bijection
\begin{equation*}
\Gamma \left( X/Spec\left( \mathcal{O}_{K}\right)\right) \cong Hom_{G(K),G(k(X)) }^{out}\left( G\left( K\right)^{un} ,G\left( k(X)\right)^{un} \right)
\end{equation*}
between sets.
\end{theorem}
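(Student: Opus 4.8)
The plan is to deduce this statement from \emph{Theorem 1.2} by translating each \'{e}tale fundamental group occurring there into a Galois group (exactly as \emph{Theorem 1.3} is deduced from \emph{Theorem 1.1}). No new geometric input is needed: only the standard dictionary between the \'{e}tale fundamental group of an integral scheme and the Galois group of a maximal unramified extension of its function field, together with a compatibility check on the structural homomorphisms.

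First I would record the four identifications that make the two right-hand sides agree. For the generic points, the \'{e}tale fundamental group of $Spec(L)$ taken with a base point over a separable closure of $L$ is by definition $Gal(L^{sep}/L)$; since $k(X)$ has characteristic zero (the surjection $X\to Spec(\mathbb{Z})$ is dominant), this gives $\pi_1^{et}(k(X))=Gal(k(X)^{al}/k(X))=G(k(X))$, and in the same way $\pi_1^{et}(K)=G(K)$. For the arithmetic objects, \emph{Definition 2.8} is arranged so that $K^{un}/K$ is the compositum of the function fields of the connected finite \'{e}tale covers of $Spec(\mathcal{O}_K)$ and $k(X)^{un}/k(X)$ is the compositum of the function fields of the connected finite \'{e}tale covers of $X$; the Galois theory of the \'{e}tale fundamental group then gives $\pi_1^{et}(Spec(\mathcal{O}_K))=Gal(K^{un}/K)=G(K)^{un}$ and $\pi_1^{et}(X)=Gal(k(X)^{un}/k(X))=G(k(X))^{un}$. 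This second pair of identifications is the point I would treat most carefully: one must check that the ``unramified'' extensions singled out by \emph{Definition 2.8} are exactly those whose associated cover is finite \'{e}tale --- in particular their behaviour at the archimedean places of $K$, and the passage through the normalisation of $X$ when $X$ is not itself normal.

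Next I would check naturality. Under the identifications above, the homomorphism $\pi_1^{et}(k(X))\to\pi_1^{et}(X)$ induced by the inclusion of the generic point $Spec(k(X))\hookrightarrow X$ becomes the canonical restriction homomorphism $G(k(X))\to G(k(X))^{un}$ (restrict an automorphism of $k(X)^{al}$ to the Galois subextension $k(X)^{un}$), the homomorphism $\pi_1^{et}(K)\to\pi_1^{et}(Spec(\mathcal{O}_K))$ becomes $G(K)\twoheadrightarrow G(K)^{un}$, and the homomorphism induced by $K=k(Y)\subseteq k(X)$ becomes the corresponding restriction map between absolute Galois groups; hence the diagram of four groups and structural homomorphisms built from \'{e}tale fundamental groups is isomorphic to the one built from Galois groups. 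Since the outer homomorphism set $Hom_{\pi_1,\pi_2}^{out}(G,H)$ depends only on this diagram, this isomorphism of diagrams induces a bijection
\begin{equation*}
Hom_{\pi_1^{et}(K),\,\pi_1^{et}(k(X))}^{out}\big(\pi_1^{et}(Spec(\mathcal{O}_K)),\,\pi_1^{et}(X)\big)\;\cong\;Hom_{G(K),\,G(k(X))}^{out}\big(G(K)^{un},\,G(k(X))^{un}\big),
\end{equation*}
and composing it with the bijection of \emph{Theorem 1.2} yields
\begin{equation*}
\Gamma\big(X/Spec(\mathcal{O}_K)\big)\;\cong\;Hom_{G(K),\,G(k(X))}^{out}\big(G(K)^{un},\,G(k(X))^{un}\big),
\end{equation*}
which is the assertion. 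The substantive difficulty is entirely absorbed into \emph{Theorem 1.2} (and hence into the monodromy-action and $sp$-completion constructions of the earlier sections); the only obstacle proper to this proof is the bookkeeping just described, namely making sure that \emph{Definition 2.8} reproduces $\pi_1^{et}(Spec(\mathcal{O}_K))$ and $\pi_1^{et}(X)$ on the nose and that the directions of the structural homomorphisms are matched consistently in the two outer homomorphism sets.
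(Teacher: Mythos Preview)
Your proposal is correct and follows essentially the same approach as the paper: the paper's proof of \emph{Theorems 1.3--4} is the single line ``It is immediate from the following fact that $G(k(X))^{un}\cong \pi^{et}_{1}(X)$ holds for any arithmetic variety $X$ (see \emph{Lemma 4.1}),'' which is precisely your translation of \emph{Theorem 1.2} via the Galois-group dictionary. Your additional naturality check on the structural homomorphisms is more careful than what the paper writes out, but it is the same argument.
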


We will prove the main theorems above in \emph{\S 8} after preparations are made in \emph{\S\S 2-7}.

\begin{remark}
In a similar manner, we can prove that the two theorems still hold for the case of projective schemes.
\end{remark}

\section{Preliminaries}

\subsection{Convention}

For an integral domain $D$, let $Fr(D)$ denote the field of fractions of $D$. In particular, $Fr(D)$ will be assumed to be contained in $\Omega $ if $D$
is contained in a field $\Omega $.

By an \textbf{integral variety} we will always understand an integral scheme over $Spec(\mathbb{Z})$ by a surjective morphism (not necessarily of finite type).

\subsection{Galois extension}

Let $L$ be an extension of a field $K$. Note that here
 $L$ is not necessarily algebraic over $K$.  Let $Gal(L/K)$ be the Galois group of $L$ over $K$.

Recall that $L$ is said to be \textbf{Galois} over $K$ if
$K$ is the invariant subfield of $Gal(L/K)$, that is, if
$K=\{x\in L :\sigma (x)=x \text{ holds for any }\sigma \in Gal(L/K) \}$.

\subsection{Quasi-galois extension}

Let $L$ be an extension of a field $K$ (not necessarily algebraic).

\begin{definition}
$L$ is said to be \textbf{quasi-galois} over $K$ if each
irreducible polynomial $f(X)\in F[X]$ that has a root in $L$ factors
completely in $L\left[ X\right] $ into linear factors for any subfield $F$ with $K\subseteq F\subseteq L$.
\end{definition}

Let $D\subseteq D_{1}\cap D_{2}$ be three integral domains. Then $D_{1}$
is said to be \textbf{quasi-galois} over $D$ if $Fr\left(
D_{1}\right) $ is quasi-galois over $Fr\left( D\right) $.

\begin{definition}
$D_{1}$ is said to be a \textbf{conjugation} of $D_{2}$ over $D$ if
there is an $F-$isomorphism $\tau:Fr(D_{1})\rightarrow
Fr(D_{2})$ such that
$
\tau(D_{1})=D_{2}$,
where $F\triangleq k(\Delta)$, $k\triangleq Fr(D)$, $\Delta$ is a transcendental basis of the field $Fr(D_{1})$ over $k$, and $F$ is contained in $Fr(D_{1})\cap Fr(D_{2})$.
\end{definition}

\subsection{Affine covering with values}

Let $X$ be a scheme. An \textbf{affine covering} of $X$ is a family $
\mathcal{C}_{X}=\{(U_{\alpha },\phi _{\alpha };A_{\alpha })\}_{\alpha \in
\Delta }$, where for each $\alpha \in \Delta $, $\phi _{\alpha }$ is an
isomorphism from an open set $U_{\alpha }$ of $X$ onto the spectrum $Spec{
A_{\alpha }}$ of a commutative ring $A_{\alpha }$.

Each $(U_{\alpha },\phi _{\alpha };A_{\alpha })\in \mathcal{C}_{X}$ is
called a \textbf{local chart}. For the sake of brevity, a local chart $
(U_{\alpha},\phi_{\alpha};A_{\alpha })$ will be denoted by $U_{\alpha}$ or $
(U_{\alpha},\phi_{\alpha})$.

An affine covering $\mathcal{C}_{X}$ of $(X, \mathcal{O}_{X})$ is said to be
\textbf{reduced} if $U_{\alpha}\neq U_{\beta} $ holds for any $\alpha\neq
\beta$ in $\Delta$.

Let $\mathfrak{Comm}$ be the category of commutative rings with identity.
For a given field $\Omega$, let $\mathfrak{Comm}(\Omega)$ be the category
consisting of the subrings of $\Omega$ and their isomorphisms.

\begin{definition}
Let $\mathfrak{Comm}_{0}$ be a subcategory of $\mathfrak{Comm}$. An affine
covering $\{(U_{\alpha},\phi_{\alpha};A_{\alpha })\}_{\alpha \in \Delta}$ of
$X$ is said to be \textbf{with values} in $\mathfrak{Comm}_{0}$ if for each $
\alpha \in \Delta$ there are $\mathcal{O}_{X}(U_{\alpha})=A_{\alpha}$ and $U_{\alpha}=Spec(A_{\alpha})$, where
 $A_{\alpha }$ is a ring contained in $\mathfrak{Comm}_{0}$.

In particular, an affine covering $\mathcal{C}_{X}$
of $X$ with values in $\mathfrak{Comm}(\Omega)$ is said to be \textbf{with
values in the field $\Omega$}.
\end{definition}

By an affine covering with values in a field, it is seen that an affine open set of a scheme is measurable, at the same time, the non-affine open sets are unmeasurable.

If we ignore the non-affine open sets, almost all properties of the scheme will be still preserved. Hence, we have the following notions.

Let $\mathcal{O}_{X}$ and $\mathcal{O}^{\prime}_{X}$ be two structure sheaves on the underlying space of an integral scheme $X$. The two integral schemes $(X,\mathcal{O}_{X})$ and $(X, \mathcal{O}^{\prime}_{X})$ are said to be \textbf{essentially equal} provided that for any open set $U$ in $X$, we have
 $$U \text{ is affine open in }(X,\mathcal{O}_{X}) \Longleftrightarrow \text{ so is }U \text{ in }(X,\mathcal{O}^{\prime}_{X})$$ and in such a case,  $D_{1}=D_{2}$ holds or  there is $Fr(D_{1})=Fr(D_{2})$ such that for any nonzero $x\in Fr(D_{1})$, either $$x\in D_{1}\bigcap D_{2}$$ or $$x\in D_{1}\setminus D_{2} \Longleftrightarrow x^{-1}\in D_{2}\setminus D_{1}$$ holds, where $D_{1}=\mathcal{O}_{X} (U)$ and $D_{2}=\mathcal{O}^{\prime}_{X} (U)$.

 Two schemes $(X,\mathcal{O}_{X})$ and $(Z,\mathcal{O}_{Z})$ are said to be \textbf{essentially equal} if the underlying spaces of $X$ and $Z$ are equal and the schemes $(X,\mathcal{O}_{X})$ and $(X,\mathcal{O}_{Z})$ are essentially equal.

\begin{definition}
An affine covering $\{(U_{\alpha },\phi _{\alpha };A_{\alpha })\}_{\alpha
\in \Delta }$ of $X$ is said to be an \textbf{affine
patching} of $X$ if the map $\phi _{\alpha }$ is the
identity map on $U_{\alpha }=SpecA_{\alpha }$ for each $\alpha \in \Delta .$
\end{definition}

Evidently, an affine patching is reduced.

\subsection{Quasi-galois closed affine covering}

Let $f:X\rightarrow Y$ be a
surjective morphism of integral schemes. Fixed an algebraic closure $\Omega$ of the function field $k(X)$.

\begin{definition}
A reduced affine covering $\mathcal{C}_{X}$ of $X$ with values in $\Omega $
is said to be \textbf{quasi-galois closed} over $Y$ by $f$ if there exists a local chart $(U_{\alpha }^{\prime },\phi _{\alpha }^{\prime
};A_{\alpha }^{\prime })\in \mathcal{C}_{X}$ such that $U_{\alpha }^{\prime
}\subseteq \varphi^{-1}(V_{\alpha})$ holds
\begin{itemize}
\item for any $(U_{\alpha },\phi _{\alpha
};A_{\alpha })\in \mathcal{C}_{X}$;

\item for any affine open set $V_{\alpha}$ in $
Y$ with $U_{\alpha }\subseteq f^{-1}(V_{\alpha})$;

\item for any
conjugate $A_{\alpha }^{\prime }$ of $A_{\alpha }$ over $B_{\alpha}$,
\end{itemize}
 where $
B_{\alpha}$ is the canonical image of $\mathcal{O}_{ Y}(V_{\alpha})$ in  $k(X)$ via $f$.
\end{definition}

\subsection{Quasi-galois closed scheme}

Let  $f:X\rightarrow Y$ be a
surjective morphism between integral schemes. Let $Aut\left( X/Y\right) $ denote the group of
automorphisms of $X$ over $Y$.

A integral scheme $Z$ is said to be a \textbf{conjugate} of $X$ over $Y$ if
there is an isomorphism $\sigma :X\rightarrow Z$ over $Y$.

\begin{definition}
$X$ is said to be \textbf{quasi-galois closed} over $Y$ by $f$ if there is an algebraically closed field $\Omega$
and a reduced affine covering $\mathcal{C}_{X}$ of $X$ with values in $
\Omega $ such that for any conjugate $Z$ of
$X$ over $Y$ the two conditions are satisfied:
\begin{itemize}
\item $(X,\mathcal{O}_{X})$ and $(Z,\mathcal{O}_{Z})$ are essentially equal if $Z$ has a reduced
affine covering with values in $\Omega$.

\item $\mathcal{C}_{Z}\subseteq \mathcal{C}_{X}$ holds if $\mathcal{C}_{Z}$
is a reduced affine covering of $Z$ with values in $\Omega $.
\end{itemize}
\end{definition}

\begin{remark}
In the above definition, $\Omega$ is in deed an algebraic closure of the function field $k(X)$; $\mathcal{C}_{X}$ is the unique maximal affine covering of $X$ with values in $\Omega$ (see \cite{An7}).
\end{remark}

\subsection{Unramified extension}

Let us recall the definition for unramified extensions of function fields over a number field in several variables.

\begin{definition}
Let $L_{1}$ and $L_{2}$ be two extensions over a number field $K$
such that $L_{1}\subseteq L_{2}$.

$\left( i\right) $ $L_{2}$ is said to be a \textbf{finite unramified Galois}
extension of $L_{1}$ if there are two arithmetic varieties $X_{1}$ and $
X_{2} $ and a surjective morphism $f:X_{2}\rightarrow X_{1}$ such that

\begin{itemize}
\item $k\left( X_{1}\right) =L_{1},k\left( X_{2}\right) =L_{2}$;

\item $X_{2}$ is a finite \'{e}tale Galois cover of $X_{1}$ by $f$.
\end{itemize}

$\left( ii\right) $ $L_{2}$ is said to be a \textbf{finite unramified}
extension of $L_{1}$ if there is a field $L_{3}$ over $K$ such that $L_{2}$
is contained in $L_{3}$ and $L_{3}$ is a finite unramified Galois extension
of $L_{1}.$

$\left( iii\right) $ $L_{2}$ is said to be an \textbf{unramified} extension
of $L_{1}$ if the field $L_{1}(\omega)$ is a finite unramified extension of $
L_{1}$ for each element $\omega \in L_{2}$. In such a case, the element $
\omega$ is said to be \textbf{unramified} over $L_{1}$.
\end{definition}

\begin{remark}
It is seen that there exists the geometric model $X_{2}/X_{1}$ for the extension $L_{2}/L_{1}$ (see \cite{An3}). In deed, we can take the valuation rings $A_{1}\subseteq A_{2}$ of $L_{1}\subseteq L_{2}$, respectively; then put $X_{1}=Spec(A_{1})$ and $X_{2}=Spec(A_{2})$.
\end{remark}

\begin{remark} Let $L_{1}\subseteq L_{2} \subseteq L_{3}$ be function field over a number field $K$. Suppose that $L_{2}/L_{1}$ and $L_{3}/L_{2}$ are unramified extensions. Then $L_{3}$ is unramified over $L_{1}$.
\end{remark}

\begin{remark}
It is seen that for the case of an algebraic extension, the unramified extension defined in \emph{Definition 2.8} coincides exactly  with that in algebraic number theory.
\end{remark}

\begin{remark}
Note that we have defined another unramified extension in \cite{An6} for the case of algebraic schemes, which is only a formally abstract definition and is different from the above one in \emph{Definition 2.8}.
\end{remark}

Let $L$ be an arbitrary extension over a number field $K$. Set

\begin{itemize}
\item $L^{al}\triangleq $ an algebraical closure of $L$;

\item $L^{un}\triangleq $ the union of all the finite unramified
subextensions over $L$ contained in $L^{al}$.
\end{itemize}

\section{Universal Covers}

\subsection{Facts on quasi-galois closed schemes.}

 Here there are several known results on quasi-galois schemes which will be used in the remainder of the paper  (see \cite{An2}-\cite{An7}).

\begin{lemma}\emph{(\textbf{Tuning scheme} \cite{An4*})}
For any integral variety $X$, there is an integral variety $Z$
satisfying the conditions:

\begin{itemize}
\item $k\left( X\right) =k\left( Z\right);$

\item $X\cong Z$ are isomorphic;

\item $Z$ has a reduced affine covering with values in $k(X)^{al}$.
\end{itemize}
\end{lemma}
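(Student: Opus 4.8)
The plan is to re-coordinatize $X$ chart by chart, realizing every affine piece as the spectrum of an honest subring of $\Omega:=k(X)^{al}$, and then to glue these spectra back together using the transition data inherited from $X$. What makes the gluing painless is that all the rings involved will be \emph{literally} subrings of the one fixed field $\Omega$, so the overlap identifications become genuine equalities rather than mere isomorphisms, and the cocycle condition becomes automatic.

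First I would fix the generic point $\xi$ of the integral scheme $X$; then $\mathcal{O}_{X,\xi}=k(X)$, and for every nonempty affine open $U\subseteq X$ the canonical map $\mathcal{O}_X(U)\to\mathcal{O}_{X,\xi}=k(X)$ is injective, because $\mathcal{O}_X(U)$ is an integral domain with fraction field $k(X)$. Composing with the inclusion $k(X)\subseteq\Omega$ gives a canonical embedding $\iota_U:\mathcal{O}_X(U)\hookrightarrow\Omega$; set $A_U:=\iota_U(\mathcal{O}_X(U))$, a subring of $\Omega$, so that $\iota_U$ is an isomorphism $\mathcal{O}_X(U)\xrightarrow{\sim}A_U$ and hence induces an isomorphism of affine schemes $\psi_U:U\xrightarrow{\sim}Spec(A_U)$. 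Since $\xi$ lies in every nonempty open of $X$, these embeddings are compatible with restriction: if $V\subseteq U$ are affine opens, then $\iota_U$ factors as $\iota_V$ composed with the restriction $\mathcal{O}_X(U)\to\mathcal{O}_X(V)$, whence $A_U\subseteq A_V$ inside $\Omega$ and $\psi_U$ carries $V$ isomorphically onto the open subscheme $Spec(A_V)\hookrightarrow Spec(A_U)$ attached to the inclusion $A_U\subseteq A_V$.

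Next I would glue. Given affine opens $U,U'$ of $X$, cover $U\cap U'$ by affine opens $W\subseteq U\cap U'$; for each such $W$ one has $A_U\subseteq A_W$ and $A_{U'}\subseteq A_W$ in $\Omega$, and $\psi_U,\psi_{U'}$ identify $W$ with the \emph{same} open subscheme $Spec(A_W)$ of $Spec(A_U)$ and of $Spec(A_{U'})$. This yields an open-immersion gluing of $Spec(A_U)$ to $Spec(A_{U'})$ along the open subsets corresponding to $U\cap U'$; since every transition map is a literal identity of subrings of $\Omega$, the cocycle condition on triple overlaps holds trivially, and the usual gluing construction produces a scheme $Z$ together with an isomorphism $h:X\xrightarrow{\sim}Z$ that restricts to $\psi_U$ on each $U$. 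Consequently $Z\cong X$ is an integral variety, and the stalk of $Z$ at its generic point is $\varinjlim_U A_U=\bigcup_U A_U=k(X)$, so that $k(Z)=k(X)$. Finally, the family of local charts $(Spec(A_U),\mathrm{id};A_U)$ on $Z$ — one for each distinct open subset of this form — is an affine patching, hence a reduced affine covering, with values in $\Omega=k(X)^{al}$; this is the third assertion.

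The compatibilities of the $\iota_U$ with restriction and with passage to basic open sets are routine, and the gluing is standard once they are in hand; the one point that deserves a little care is the reducedness clause. There one should pass to the \emph{set} of distinct affine charts $Spec(A_U)$ inside $Z$ — equivalently, to the maximal affine covering of $Z$ with values in $\Omega$ — rather than to the family indexed by the affine opens of $X$, because a priori two different affine opens of $X$ can share the same coordinate ring inside $\Omega$.
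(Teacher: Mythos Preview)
The paper does not supply a proof of this lemma here; it is quoted from \cite{An4*}, so there is no in-paper argument to compare against. Your construction is the natural one and is correct: embed each $\mathcal{O}_X(U)$ canonically into $k(X)\subseteq\Omega$ via the generic point, replace $U$ by $Spec(A_U)$ with $A_U$ the image, and glue; because every transition map is a literal inclusion of subrings of a single field, the cocycle condition is automatic, the glued scheme $Z$ is isomorphic to $X$ with $k(Z)=k(X)$, and the charts $(Spec(A_U),\mathrm{id};A_U)$ furnish the required reduced affine covering with values in $\Omega$. This is almost certainly the argument carried out in the cited reference as well.

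One remark on the last paragraph: the paper's Definition~2.3 of ``with values in $\Omega$'' literally demands $U_\alpha=Spec(A_\alpha)$, so if $X$ is not separated one can have distinct affine opens $U\neq U'$ with $A_U=A_{U'}$ in $k(X)$ (think of the line with doubled origin), and then the identification of both images in $Z$ with the \emph{same} set $Spec(A_U)$ is impossible. This is a quirk of the paper's conventions rather than a defect in your argument; for separated $X$ the issue evaporates, and in any event your gluing still produces the isomorphic scheme $Z$ together with an affine covering whose coordinate rings are subrings of $\Omega$, which is the substantive content of the lemma.
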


\begin{lemma}
\emph{(\textbf{Geometric model} \cite{An5})}
Let $f:X\rightarrow Y$ be a surjective morphism of integral varieties.
Suppose that $X$ is quasi-galois closed over $Y$ by $f$ and that $k\left( X\right) $
is canonically Galois over $k\left( Y\right) .$ Then $f$ is
affine and there is a group isomorphism
\begin{equation*}
Aut\left( X/Y\right) \cong Gal\left( k\left( X\right) /k\left( Y\right)
\right) .
\end{equation*}
\end{lemma}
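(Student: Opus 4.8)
The plan is to establish the isomorphism $Aut(X/Y)\cong Gal(k(X)/k(Y))$ in two stages: first constructing a natural map $Aut(X/Y)\to Gal(k(X)/k(Y))$, then showing it is injective and surjective using the quasi-galois closed hypothesis. For the natural map, observe that any automorphism $\sigma:X\to X$ over $Y$ induces, by functoriality on function fields (since $X$ is integral, $\sigma$ induces an automorphism of the local ring at the generic point, hence of $k(X)$), a field automorphism $\sigma^{\sharp}:k(X)\to k(X)$ that restricts to the identity on the canonical image $k(Y)\hookrightarrow k(X)$; thus $\sigma^{\sharp}\in Gal(k(X)/k(Y))$, and $\sigma\mapsto\sigma^{\sharp}$ is a group homomorphism. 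Along the way I would also want to record that $f$ is affine: by Lemma 3.1 (tuning scheme) I may assume $X$ carries a reduced affine covering with values in $\Omega=k(X)^{al}$, and then the quasi-galois closed condition, together with the description of $\mathcal{C}_X$ as the unique maximal such covering (Remark following Definition 2.7), forces each chart $U_\alpha$ to sit inside $f^{-1}(V_\alpha)$ for an affine open $V_\alpha\subseteq Y$; running over all charts and using that $X$ is covered by the $U_\alpha$ shows $f$ pulls affine opens back to affine opens, i.e. $f$ is affine.

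For injectivity: if $\sigma^{\sharp}=\mathrm{id}_{k(X)}$, then $\sigma$ fixes the generic point and agrees with the identity on a dense open subscheme; since $X$ is integral (hence reduced and separated is the delicate point — but for schemes of finite type over $\mathbb{Z}$ separatedness can be arranged, or one argues chart by chart using the affine covering with values in $\Omega$, where $\sigma$ acts on each $A_\alpha\subseteq\Omega$ compatibly with its action on $\Omega\supseteq k(X)$ and hence is the identity), $\sigma=\mathrm{id}_X$. The cleanest route is to use the affine covering $\mathcal{C}_X$ with values in $\Omega$: an automorphism over $Y$ permutes the charts and on each chart is determined by a ring automorphism of $A_\alpha$ lying over $B_\alpha=$ image of $\mathcal{O}_Y(V_\alpha)$; since $A_\alpha\subseteq k(X)$ and $\sigma$ acts as $\sigma^{\sharp}|_{A_\alpha}$, the map $\sigma\mapsto\sigma^{\sharp}$ is faithful.

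For surjectivity — which I expect to be the main obstacle — take $\tau\in Gal(k(X)/k(Y))$; I must produce an automorphism $\sigma_\tau:X\to X$ over $Y$ with $\sigma_\tau^{\sharp}=\tau$. The idea is: for each local chart $(U_\alpha,\phi_\alpha;A_\alpha)\in\mathcal{C}_X$ lying over an affine $V_\alpha\subseteq Y$ with $B_\alpha$ as above, $\tau(A_\alpha)$ is a subring of $k(X)\subseteq\Omega$ which is a conjugate of $A_\alpha$ over $B_\alpha$ in the sense of Definition 2.3 (indeed $\tau$ fixes $B_\alpha$ pointwise and $\tau|_{Fr(A_\alpha)}:Fr(A_\alpha)\to Fr(\tau(A_\alpha))$ is the required $F$-isomorphism); hence by the quasi-galois closed condition $Spec(\tau(A_\alpha))$ is again a chart of $X$, i.e. $\tau$ permutes the charts of $\mathcal{C}_X$. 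These local isomorphisms $Spec(A_\alpha)\xrightarrow{\tau}Spec(\tau(A_\alpha))$ are compatible on overlaps (they all come from the single field automorphism $\tau$ of $k(X)$, so they agree on the common generic point and, being morphisms of reduced schemes into $X$, agree on intersections), so they glue to a morphism $\sigma_\tau:X\to X$; applying the same construction to $\tau^{-1}$ gives a two-sided inverse, so $\sigma_\tau\in Aut(X/Y)$, and by construction $\sigma_\tau^{\sharp}=\tau$. The subtle points to handle carefully are: verifying that every chart of $\mathcal{C}_X$ does lie over some affine open of $Y$ (this uses $f$ affine plus the maximality of $\mathcal{C}_X$), checking the conjugation hypothesis of Definition 2.6/2.3 applies with the transcendence-basis bookkeeping $F=k(\Delta)$ matching $B_\alpha$, and confirming the glued map is the identity on $k(Y)$-points — i.e. is genuinely over $Y$ — which follows since each local piece is a $B_\alpha$-algebra homomorphism and the $B_\alpha$ cover the image in $Y$.
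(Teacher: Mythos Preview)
The paper does not prove this lemma: it is listed in \S 3.1 among the ``known results on quasi-galois schemes'' and carries only the citation \cite{An5}, with no proof block following the statement. There is therefore no in-paper argument to compare your proposal against.

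That said, your outline is a reasonable direct attack using the paper's own apparatus. You pass via the Criterion (Lemma~3.5) from Definition~2.6 to the quasi-galois closed affine covering of Definition~2.5, then use its conjugate-of-$A_\alpha$ clause to show that each $\tau\in Gal(k(X)/k(Y))$ permutes the charts of $\mathcal{C}_X$, and glue the resulting local isomorphisms to an automorphism of $X$ over $Y$; injectivity via the embedding of all $A_\alpha$ in $\Omega$ is the clean way to handle it in this framework. The points you flag as subtle --- that every chart lies over some affine open of $Y$, the transcendence-basis bookkeeping $F=k(\Delta)$ in Definition~2.2, and that the glued map is genuinely over $Y$ --- are exactly where the paper's loose notions (``essentially equal'', the maximality remark after Definition~2.6) would need to do work, and the paper does not elaborate on them here. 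Whether this matches the argument in the cited preprint \cite{An5} cannot be determined from the present paper.
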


\begin{lemma}
\emph{(\textbf{Quotient} \cite{An5})}
Let $X$ and $Y$ be integral varieties such that $X$ is quasi-galois closed
over $Y$ by a surjective morphism $\phi $. Then there is a natural
isomorphism
\begin{equation*}
\mathcal{O}_{Y}\cong \phi _{\ast }(\mathcal{O}_{X})^{{Aut}\left( X/Y\right) }.
\end{equation*}

Here $(\mathcal{O}_{X})^{{Aut}\left( X/Y\right) }(U)$ denotes the invariant
subring of $\mathcal{O}_{X}(U)$ under the natural action of ${Aut}\left(
X/Y\right) $ for any open subset $U$ of $X$.
\end{lemma}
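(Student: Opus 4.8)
The plan is to construct the natural map $\mathcal{O}_{Y}\to \phi_{\ast}(\mathcal{O}_{X})^{G}$ (writing $G=Aut(X/Y)$), to observe that it is a monomorphism essentially for free, and then to prove surjectivity locally on $Y$, where the real work is to convert the conjugation-closure property of Definitions 2.5--2.6 into a ring equality. Concretely: for any open $V\subseteq Y$ and any $g\in G$ one has $\phi\circ g=\phi$, so $g$ fixes the image of $\mathcal{O}_{Y}(V)$ in $\mathcal{O}_{X}(\phi^{-1}(V))$; hence the comorphism $\phi^{\#}$ factors through invariants and yields a morphism of sheaves $\iota:\mathcal{O}_{Y}\to \phi_{\ast}(\mathcal{O}_{X})^{G}$ on $Y$. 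Because $X$ and $Y$ are integral and $\phi$ is dominant, $\phi^{\#}$ is injective on function fields, hence on every section ring, so $\iota$ is injective. It then remains to show $\iota$ is surjective, and since $\iota$ is a morphism of sheaves it suffices to test this on a basis of affine opens of $Y$.

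By Lemma 3.2 the morphism $\phi$ is affine and $G\cong Gal(k(X)/k(Y))$ (in the present characteristic-zero setting, quasi-galois closedness supplies the canonical Galois property needed to invoke that lemma). So fix an affine open $V=Spec\,B\subseteq Y$; then $\phi^{-1}(V)=Spec\,A$ with $B\hookrightarrow A\subseteq k(X)$, and $\phi^{-1}(V)$ is $G$-stable since $G$ acts over $Y$. We must prove $A^{G}=B$, up to essential equality of the resulting structure sheaves. Since $k(X)$ is Galois over $k(Y)$ with group $G$, its fixed field is $k(X)^{G}=k(Y)$; as the $G$-action on $A$ is the restriction of the one on $k(X)$, this gives $A^{G}=A\cap k(X)^{G}=A\cap k(Y)$. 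The inclusion $B\subseteq A\cap k(Y)$ was already noted, so the whole lemma reduces to the ring-theoretic identity $A\cap k(Y)=B$.

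To establish $A\cap k(Y)=B$ I would use the reduced affine covering $\mathcal{C}_{X}$ with values in $\Omega=k(X)^{al}$ witnessing that $X$ is quasi-galois closed over $Y$, together with its maximality and uniqueness (Remark 2.7). Passing to the local chart over $V$ and refining it if necessary, Definition 2.5 says that every conjugate $A'$ of $A$ over $B_{\alpha}$ --- where $B_{\alpha}$ is the canonical image of $\mathcal{O}_{Y}(V)$ in $k(X)$ --- is again the ring of a chart of $\mathcal{C}_{X}$ lying over $V$; by maximality such charts already belong to $\mathcal{C}_{X}$, so $A$ is stable under all conjugations of $k(X)$ fixing $B_{\alpha}$, and $B_{\alpha}$ is recovered as the common subring fixed by all of them. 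Hence an element $x\in A\cap k(Y)$, being fixed by every conjugation of $k(X)$ over $k(Y)$ while lying in such a conjugation-stable $A$, satisfies a degree-one defining relation over $B_{\alpha}$, i.e. $x\in B_{\alpha}=B$. Gluing the local equalities $B=A^{G}$ over a basis of affine opens --- compatibility with restriction being immediate --- produces the sheaf isomorphism $\mathcal{O}_{Y}\cong \phi_{\ast}(\mathcal{O}_{X})^{G}$, and naturality of $\iota$ is automatic.

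The main obstacle is this last step: distilling the combinatorial ``closed under conjugate charts'' condition of Definitions 2.5--2.6 into the clean equality $A\cap k(Y)=B$. In particular one must rule out an a priori larger element of $k(Y)$ hiding inside $A$ (an integral-closure type phenomenon) and keep track of the slack permitted by essential equality of structure sheaves. Everything preceding it --- the construction of $\iota$, its injectivity, the reduction to affine $V$, and the function-field computation $A^{G}=A\cap k(Y)$ --- is formal once Lemma 3.2 is granted.
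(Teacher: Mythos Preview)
The paper does not prove this lemma; it is simply imported from \cite{An5} with no argument given here, so there is no in-paper proof to compare your proposal against. I can therefore only assess your outline on its own terms.

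Your overall architecture --- construct the natural map, check injectivity from integrality, reduce surjectivity to an affine-local statement $A^{G}=B$ --- is sound and is the obvious way to attack such a statement. Two points, however, are genuine gaps rather than routine details.

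First, your appeal to Lemma 3.2 is circular as written. That lemma \emph{assumes} $k(X)$ is canonically Galois over $k(Y)$; you assert that ``quasi-galois closedness supplies the canonical Galois property'' in characteristic zero, but this is exactly what Lemma 3.4 proves, and only for arithmetic varieties with $\phi$ of finite type. Lemma 3.3 is stated for integral varieties and arbitrary surjective $\phi$, so you cannot invoke Lemma 3.4, and you have not supplied an independent argument that $k(X)/k(Y)$ is Galois (indeed the extension need not even be algebraic). Without this you have neither affineness of $\phi$ nor the identification $G\cong Gal(k(X)/k(Y))$, and the entire reduction to $A^{G}=A\cap k(Y)$ collapses.

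Second, even granting the Galois identification, your argument for $A\cap k(Y)\subseteq B$ is not valid. You write that an element $x\in A\cap k(Y)$, being fixed by all conjugations and lying in a conjugation-stable $A$, ``satisfies a degree-one defining relation over $B_{\alpha}$, i.e.\ $x\in B_{\alpha}$''. This inference does not hold: being $G$-fixed only places $x$ in $k(Y)$, which you already knew, and stability of $A$ under conjugation says nothing about individual elements descending to $B$. The phenomenon you yourself flag --- an element of $k(Y)$ integral over $B$ but not in $B$ --- is precisely what is not excluded by your reasoning, and the paper's loose notion of ``essential equality'' does not obviously absorb it either. This is the crux of the lemma, and your sketch does not close it.
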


\begin{lemma}
\emph{(\textbf{Geometric model} \cite{An2,An2*})}
Let $X$ and $Y$ be arithmetic
varieties such that $X$ is quasi-galois closed over $Y$ by a
surjective morphism $f$ of finite type. Then
\begin{itemize}
\item $f$ is affine;

\item $k\left( X\right) $ is canonically Galois
over $k(Y)$;

\item there is a group isomorphism
$$
{Aut}\left( X/Y\right) \cong Gal(k\left( X\right) /k(Y)).
$$
\end{itemize}

In particular, let $\dim X=\dim Y$. Then
$X$ is a pseudo-galois cover of \, $Y$ in the sense of Suslin-Voevodsky \emph{(see \cite{sv1,sv2} for definition)}.
\end{lemma}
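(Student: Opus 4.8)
The plan is to treat this as the finite-type, arithmetic refinement of Lemma 3.2 and to reduce everything to a chart-by-chart comparison. First I would apply the Tuning Scheme Lemma (Lemma 3.1) so as to assume that $X$ itself carries the reduced affine covering $\mathcal{C}_{X}$ with values in $\Omega=k(X)^{al}$ furnished by the definition of quasi-galois closed; by the remark following the definition of a quasi-galois closed scheme we may take $\mathcal{C}_{X}$ to be the unique maximal such covering, so it is automatically stable under passing to conjugates over $Y$. Fixing an affine open $V\subseteq Y$ with $B=\mathcal{O}_{Y}(V)$ and letting $B_{0}$ be the canonical image of $B$ in $k(X)$ via $f$, I would look at the local charts $(U_{\alpha},\phi_{\alpha};A_{\alpha})\in\mathcal{C}_{X}$ lying over $V$: the quasi-galois closed condition forces every conjugate of such an $A_{\alpha}$ over $B_{0}$ to reappear in $\mathcal{C}_{X}$ over $V$, and the \emph{essentially equal} clause pins these conjugate rings down up to the inversion ambiguity permitted on their common fraction field. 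Gluing, I expect $f^{-1}(V)$ to be the spectrum of a single ring $A_{V}\subseteq k(X)$ with $B_{0}\subseteq A_{V}$, which gives that $f$ is affine.

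For the field-theoretic assertion I would first argue that the quasi-galois closed property of $X/Y$ implies that $k(X)$ is quasi-galois over $k(Y)$ in the sense of Definition 2.1: if $g(T)\in F[T]$ is irreducible over an intermediate field $F$ and has a root $\xi\in k(X)$, then the remaining roots of $g$ occur as coordinates of the conjugates over $F$ (equivalently over $B_{0}$) of a chart containing $\xi$, and those conjugates lie in $\mathcal{C}_{X}$, so $g$ splits in $k(X)[T]$. Combining this normality with the Quotient Lemma (Lemma 3.3), which gives $\mathcal{O}_{Y}\cong f_{\ast}(\mathcal{O}_{X})^{Aut(X/Y)}$ and hence, on generic points, $k(Y)=k(X)^{Aut(X/Y)}$, yields that $k(X)$ is canonically Galois over $k(Y)$.

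The isomorphism $Aut(X/Y)\cong Gal(k(X)/k(Y))$ I would obtain from the restriction homomorphism to the generic point. Injectivity is easy: a $Y$-automorphism of $X$ fixing $k(X)$ fixes each chart $A_{\alpha}\subseteq\Omega$ and hence all of $X$. Surjectivity is the step I expect to be the main obstacle. Given $\tau\in Gal(k(X)/k(Y))$, transporting $\mathcal{C}_{X}$ by $\tau$ produces a reduced affine covering, with values in $\Omega$, of a conjugate $Z$ of $X$ over $Y$; the definition of quasi-galois closed then gives $\mathcal{C}_{Z}\subseteq\mathcal{C}_{X}$ and that $(X,\mathcal{O}_{X})$ and $(Z,\mathcal{O}_{Z})$ are essentially equal, and this is exactly the data needed to glue the chartwise maps into a $Y$-automorphism of $X$ inducing $\tau$. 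The delicate point is that \emph{essentially equal} only controls $\mathcal{O}_{X}(U)$ and $\mathcal{O}_{Z}(U)$ up to inverting certain elements of a common fraction field, so one must verify that this does not obstruct the gluing; here I would lean on the maximality of $\mathcal{C}_{X}$ together with the affineness already proved, so that $f^{-1}(V)$ and its $\tau$-image are spectra of rings with the same fraction field and the same affine patching structure over $V$.

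Finally, suppose $\dim X=\dim Y$. Since $X$ and $Y$ are of finite type over $Spec(\mathbb{Z})$ and dominate it, $k(X)$ and $k(Y)$ are finitely generated extensions of $\mathbb{Q}$ of equal transcendence degree, so $k(X)/k(Y)$ is algebraic, hence finite. Then $f$ is affine and quasi-finite, and a normalization and integral-closure argument—using that the conjugate-stability built into $\mathcal{C}_{X}$ forces $A_{V}$ to be integrally closed over $B_{0}$, together with finiteness of integral closure over the excellent rings in play—shows $f$ is finite. A finite surjective morphism $X\to Y$ of integral varieties with $X$ normal and $k(X)/k(Y)$ a finite normal extension is, by definition, a pseudo-galois cover in the sense of Suslin-Voevodsky, which completes the proof.
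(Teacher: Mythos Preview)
The paper does not actually prove this lemma. Lemma~3.4 is one of the block of results in \S3.1 introduced with ``Here there are several known results on quasi-galois schemes which will be used in the remainder of the paper (see [An2]--[An7])'', and it is attributed to the author's earlier preprints [An2, An2*] with no argument given in the present paper. So there is no in-paper proof to compare your proposal against; any comparison would have to be with those external preprints.

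On the merits of your outline itself: the strategy is plausible and does follow the architecture suggested by \S\S2--3, but several steps are sketches rather than arguments. The affineness step (``Gluing, I expect $f^{-1}(V)$ to be the spectrum of a single ring'') does not follow just from the fact that every conjugate of $A_{\alpha}$ over $B_{0}$ reappears as a chart; that gives stability of the covering under conjugation, not that the charts over $V$ glue to a single affine. You would need a separate finiteness argument (this is where the finite-type hypothesis, absent in Lemma~3.2, must enter). Second, you invoke Lemma~3.3 to obtain $k(Y)=k(X)^{Aut(X/Y)}$, but Lemma~3.3 is itself only cited here (from [An5]); you should check that its proof there does not already rely on the present statement, or else supply an independent argument for the fixed-field identification. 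Finally, in the pseudo-galois step you assert that conjugate-stability forces $A_{V}$ to be integrally closed; that needs justification, since stability under $Gal(k(X)/k(Y))$-conjugates does not by itself force integral closedness in $k(X)$.
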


\begin{lemma}
\emph{(\textbf{Criterion})}
Let $X,Y$ be integral schemes and let $f :X\rightarrow Y$ be a surjective morphism.
Suppose that the function field $k(Y)$ is contained in $\Omega$. The following statements are equivalent:
\begin{itemize}
\item The scheme $X$ is
quasi-galois closed over $Y$ by $f$.

\item There is a unique maximal affine
patching $\mathcal{C}_{X}$ of $X$ with values in $\Omega $ such that $\mathcal{C}_{X}$ is quasi-galois closed over $Y$ by $f$.
\end{itemize}
\end{lemma}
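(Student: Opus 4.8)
The plan is to prove the two implications separately. Throughout I would use the Tuning Scheme Lemma (\emph{Lemma 3.1}) and Remark 2.2 to pass freely between a reduced affine covering of $X$ with values in $\Omega$ and an affine patching of $X$, and to translate the conjugate conditions of \emph{Definition 2.6} and those of \emph{Definition 2.7} into one another. By \emph{Lemma 3.1} there is no loss of generality in assuming that $X$ itself admits a reduced affine covering with values in $\Omega=k(X)^{al}$, since isomorphic integral varieties are simultaneously quasi-galois closed over $Y$ or not, and carry corresponding affine coverings; note that once a covering has values in $\Omega$, each local chart already satisfies $U_\alpha=Spec(A_\alpha)$, so replacing each $\phi_\alpha$ by the identity converts it into an affine patching without altering the scheme, up to the essential equality of \emph{\S 2.4}.

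For the implication from quasi-galois closedness of the scheme to the patching: assume $X$ is quasi-galois closed over $Y$ by $f$. By \emph{Definition 2.7} and Remark 2.2, the unique maximal affine covering $\mathcal{C}_X$ of $X$ with values in $\Omega$ satisfies the two conjugate conditions. Turning each $\phi_\alpha$ into the identity exhibits $\mathcal{C}_X$ as the maximal affine patching of $X$ with values in $\Omega$, its uniqueness being inherited from Remark 2.2 (see \cite{An7}). It then remains to check that $\mathcal{C}_X$ is quasi-galois closed over $Y$ by $f$ in the sense of \emph{Definition 2.6}: given a chart $(U_\alpha,\mathrm{id};A_\alpha)\in\mathcal{C}_X$, an affine open $V_\alpha\subseteq Y$ with $U_\alpha\subseteq f^{-1}(V_\alpha)$ and image ring $B_\alpha$, and a conjugate $A_\alpha'$ of $A_\alpha$ over $B_\alpha$, the corresponding chart must already lie in $\mathcal{C}_X$ precisely because $\mathcal{C}_X$ was closed under the conjugate conditions of \emph{Definition 2.7}.

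For the converse: suppose $\mathcal{C}_X$ is the unique maximal affine patching of $X$ with values in $\Omega$ that is quasi-galois closed over $Y$ by $f$. An affine patching is in particular a reduced affine covering, so the first requirement of \emph{Definition 2.7} (existence of such $\mathcal{C}_X$) is met, and $\Omega=k(X)^{al}$ by the conventions underlying \emph{Definition 2.6}. Let $Z$ be any conjugate of $X$ over $Y$, with $Y$-isomorphism $\sigma:X\to Z$, and let $\mathcal{C}_Z$ be a reduced affine covering of $Z$ with values in $\Omega$. Transporting $\mathcal{C}_Z$ along $\sigma^{-1}$ and using that the conjugate conditions of \emph{Definition 2.6} are invariant under the $Y$-isomorphism $\sigma$, every chart of $\mathcal{C}_Z$ becomes, over the relevant image ring $B_\alpha$, a conjugate (in the sense of \emph{Definition 2.3}) of a chart of $\mathcal{C}_X$; by the quasi-galois closedness of $\mathcal{C}_X$ such conjugate charts already belong to $\mathcal{C}_X$, whence $\mathcal{C}_Z\subseteq\mathcal{C}_X$. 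Comparing the structure rings $\mathcal{O}_X(U)$ and $\mathcal{O}_Z(U)$ on each affine chart, both being subrings of $\Omega$ with common fraction field $k(X)$, yields that $(X,\mathcal{O}_X)$ and $(Z,\mathcal{O}_Z)$ are essentially equal. This verifies the two conditions of \emph{Definition 2.7}, so $X$ is quasi-galois closed over $Y$ by $f$.

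The step I expect to be the main obstacle is the chart-by-chart bookkeeping in the converse direction: showing rigorously that after the $Y$-isomorphism every chart of an arbitrary reduced affine covering of a conjugate $Z$ is a conjugate over some $B_\alpha$ of a chart in $\mathcal{C}_X$, and that ``essentially equal'' then follows from the resulting ring comparisons. This is exactly where the definitions of \emph{conjugation} (\emph{Definition 2.3}) and of \emph{essentially equal} schemes must be used in tandem, and it is the delicate point at which one must keep careful track of affine versus non-affine open sets.
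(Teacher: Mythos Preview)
The paper itself gives no proof beyond ``It is easily proved in a manner similar to \cite{An7},'' so your proposal already supplies more than the paper does. Your overall strategy---passing to the unique maximal affine covering with values in $\Omega$ and translating between the ring-level conjugacy condition on the covering and the scheme-level conjugacy condition on $X$---is the natural one and is presumably what \cite{An7} carries out.

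Two points. First, your internal cross-references are systematically off: your ``Remark 2.2'' is \emph{Remark 2.7}, your ``Definition 2.3'' (conjugation of domains) is \emph{Definition 2.2}, your ``Definition 2.6'' (quasi-galois closed covering) is \emph{Definition 2.5}, and your ``Definition 2.7'' (quasi-galois closed scheme) is \emph{Definition 2.6}. Second, and more substantively, you correctly isolate but do not actually carry out the key step. The scheme-level definition concerns conjugates $Z$ of the \emph{entire} scheme $X$ over $Y$, whereas the covering-level definition concerns conjugates $A_\alpha'$ of a \emph{single} ring $A_\alpha$ over $B_\alpha$. In your forward direction you write that a given ring conjugate ``must already lie in $\mathcal{C}_X$ precisely because $\mathcal{C}_X$ was closed under the conjugate conditions,'' but to invoke \emph{Definition 2.6} you would first have to manufacture, from the single field isomorphism $Fr(A_\alpha)\to Fr(A_\alpha')$, a global $Y$-conjugate $Z$ of $X$ that carries $Spec(A_\alpha')$ as a chart; in the converse direction you must check that a $Y$-isomorphism $\sigma:X\to Z$ restricts on each affine chart to a conjugation over the relevant $B_\alpha$ in the sense of \emph{Definition 2.2}. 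Your final paragraph rightly flags this local-to-global bookkeeping as the main obstacle; it is exactly the content the paper defers to \cite{An7}, and it is not yet discharged in your write-up.
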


\begin{proof}
It is easily proved in a manner similar to \cite{An7}.
\end{proof}

\subsection{A universal cover for the \'{E}tale fundamental group}

For convenience, let's recall the universal cover for an \'{e}tale fundamental group of an arithmetic variety.

Fixed an arithmetic variety $X$. Let $\Omega$ be an algebraic closure of the function field $k(X)$.
In the following we will construct an integral variety $X_{\Omega _{et}}$ and a morphism $p_{X}:X_{\Omega _{et}} \to X$ such that $X_{\Omega _{et}}$ is quasi-galois closed over $X$.

For brevity, put $K=k\left( X\right) $ and $L=K^{un}\subseteq \Omega .$
By \emph{Lemma 3.1}, without loss of generality, assume that $X$ has a reduced affine covering $\mathcal{C}_{X}$ with values in $\Omega .$
We choose $\mathcal{C}_{X}$ to be maximal (in the sense of set inclusion).

We will proceed in several steps:
\begin{itemize}
\item Fixed a set $\Delta $ of generators of the field $L$
over $K$. Put $G=Gal\left( L/K\right) .$

\item For any local chart $\left( V,\psi
_{V},B_{V}\right) \in \mathcal{C}_{X}$, define $A_{V}=B_{V}\left[ \Delta _{V}\right] $, i.e., the subring of $L$
generated over $B_{V}$ by the set
$
\Delta _{V}=\{\sigma \left( x\right) \in L:\sigma \in G,x\in \Delta \}.
$
Set $i_{V}:B_{V}\rightarrow A_{V}$ to be the inclusion.

\item Define
$$
\Sigma =\coprod\limits_{\left( V,\psi _{V},B_{V}\right) \in \mathcal{C}
_{X}}Spec\left( A_{V}\right)
$$
to be the disjoint union. Let $\pi _{X}:\Sigma \rightarrow X$ be the
projection induced by the inclusions $i_{V}$.

\item Define an equivalence relation $R_{\Sigma }$ in $
\Sigma $ in such a manner:

For any $x_{1},x_{2}\in \Sigma $, we say $x_{1}\sim x_{2}$ if and only if $
j_{x_{1}}=j_{x_{2}}$ holds in $L$.

Here $j_{x}$ denotes the corresponding
prime ideal of $A_{V}$ to a point $x\in Spec\left( A_{V}\right) $.

Let $ X_{\Omega _{et}}$ be the quotient space $\Sigma /\sim $ and let $\pi_{\Omega_{et}}:\Sigma \rightarrow  X_{\Omega _{et}}$ be the
projection of spaces.

\item Define a map $p_{X}:X_{\Omega_{et}}\rightarrow X$ of spaces  by
$
\pi_{\Omega_{et}}\left( z\right) \longmapsto \pi _{X}\left( z\right)
$
for each $z\in \Sigma $.

\item Define
\begin{equation*}
\mathcal{C}_{X_{\Omega_{et}}}=\{\left( U_{V},\varphi _{V},A_{V}\right) \}_{\left( V,\psi
_{V},B_{V}\right) \in \mathcal{C}_{X}}
\end{equation*}%
where $U_{V}=\pi _{X}^{-1}\left( V\right) $ and $\varphi
_{V}:U_{V}\rightarrow Spec(A_{V})$ is the identity map for each $\left(
V,\psi _{V},B_{V}\right) \in \mathcal{C}_{X}$.

Hence, there is a scheme, namely $X_{\Omega_{et}}$,
obtained by gluing the affine
schemes $Spec\left( A_{V}\right) $ for all $\left( U_{V},\varphi
_{V},A_{V}\right) \in \mathcal{C}_{X}$ with respect to the equivalence
relation $R_{\Sigma }$ (see \cite{EGA,Hrtsh}). Naturally, $p_{X}$ becomes a morphism of schemes.
\end{itemize}
This completes the construction.

It follows that we have the following lemma.

\begin{lemma}
\emph{\textbf{(Universal cover)}}
For an arithmetic variety $X$, there is an integral variety $X_{\Omega
_{et}}$ and a surjective morphism $p_{X}:X_{\Omega _{et}}\rightarrow X$
satisfying the conditions:

\begin{itemize}
\item $k\left( X_{\Omega _{et}}\right) ={k(X)}^{un}$;

\item $p_{X}$ is affine;

\item $k\left( X_{\Omega _{et}}\right) $ is Galois over $k\left( X\right) ;$

\item $X_{\Omega _{et}}$ is quasi-galois closed over $X$ by $p_{X}$.
\end{itemize}
\end{lemma}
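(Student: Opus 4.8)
The plan is to verify each of the four asserted properties directly from the explicit construction of $X_{\Omega_{et}}$ and $p_X$ given above, drawing on the facts in \S3.1.

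First I would identify the function field. Since $\mathcal{C}_X$ is a reduced affine covering of $X$ with values in $\Omega$, and for each local chart $(V,\psi_V,B_V)$ we set $A_V = B_V[\Delta_V]$ where $\Delta_V$ consists of the $G$-translates of a generating set $\Delta$ of $L=K^{un}$ over $K$, one sees that the fraction field of any $A_V$ is exactly the subfield of $\Omega$ generated over $k(X)$ by all of $\{\sigma(x):\sigma\in G,\ x\in\Delta\}$. Because $\Delta$ generates $L$ over $K$ and $L/K$ is Galois, this subfield is stable under $G$ and equals $L$. Gluing along the equivalence relation $R_\Sigma$ identifies points with the same prime ideal in $L$, so the generic points of the charts are all identified to a single generic point with residue field $L=k(X)^{un}$; hence $k(X_{\Omega_{et}})=k(X)^{un}$. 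This also immediately gives the third bullet: $k(X_{\Omega_{et}})=K^{un}$ is Galois over $K=k(X)$, since $K^{un}$ is a union of finite unramified Galois extensions and is therefore fixed-field-Galois over $K$.

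Next, affineness of $p_X$. Over each affine open $V=\mathrm{Spec}(B_V)$ in $\mathcal{C}_X$ we have $p_X^{-1}(V)=\mathrm{Spec}(A_V)$ with $A_V=B_V[\Delta_V]$, and $p_X$ restricted there is $\mathrm{Spec}$ of the inclusion $i_V:B_V\hookrightarrow A_V$; since $\{V\}$ covers $X$, $p_X$ is affine. Surjectivity follows because each $\mathrm{Spec}(A_V)\to\mathrm{Spec}(B_V)$ is dominant (an inclusion of domains with the same fraction-field-extension being integral-ish, or at least lying-over for the generic point plus continuity) — more precisely, since $A_V$ is a subring of $\Omega$ containing $B_V$ and the construction matches the one used for the tuning/geometric-model lemmas, surjectivity of $\pi_X$ is inherited, and $p_X$ is the induced map on the quotient.

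Finally, and this is the step I expect to be the main obstacle, quasi-galois closedness of $X_{\Omega_{et}}$ over $X$ by $p_X$. The strategy is to invoke the Criterion (Lemma 3.5): it suffices to exhibit a maximal affine patching of $X_{\Omega_{et}}$ with values in $\Omega$ that is quasi-galois closed over $X$ in the sense of Definition 2.5, i.e. one must check that for every local chart, every affine open $V$ of $X$ lying below it, and every conjugate of $A_V$ over the image $B_V$ of $\mathcal{O}_X(V)$, that conjugate already appears as a chart of $\mathcal{C}_{X_{\Omega_{et}}}$ inside $p_X^{-1}(V)$. Here one uses that $A_V=B_V[\Delta_V]$ was built by adjoining \emph{all} $G$-conjugates of the generators $\Delta$: any $F$-conjugate $\tau(A_V)$ (with $F$ the purely transcendental part over $Fr(B_V)$) sends $\Delta_V$ into $L$, and because $\Delta_V$ is $G$-stable and $L/K$ is normal (quasi-galois), $\tau$ restricted to $L$ lies in $G$, so $\tau(A_V)=A_{V}$ as a subring of $\Omega$ up to the identifications encoded in $R_\Sigma$. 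Thus the affine covering is stable under conjugation and, after passing to the maximal such (using Lemma 3.1 and the uniqueness in the Remark after Definition 2.6), is quasi-galois closed; applying Lemma 3.5 then gives that $X_{\Omega_{et}}$ itself is quasi-galois closed over $X$. The delicate bookkeeping is in checking that the gluing relation $R_\Sigma$ is exactly compatible with these conjugation identifications, so that no chart is missing and none is doubled — this is where I would spend the most care, modeling the argument on \cite{An7}.
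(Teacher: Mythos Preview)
Your proposal is correct and follows essentially the same route as the paper: read off $k(X_{\Omega_{et}})=K^{un}$ from the construction, argue normality of $K^{un}/K$, and apply the Criterion (Lemma~3.5) to the maximal affine patching $\mathcal{C}_{X_{\Omega_{et}}}$ to obtain quasi-galois closedness. The only minor deviation is that you verify affineness of $p_X$ directly from $p_X^{-1}(V)=\mathrm{Spec}(A_V)$, whereas the paper first establishes Galois and quasi-galois closedness and then invokes Lemma~3.2 to conclude affineness.
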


Such an integral variety $X_{\Omega _{et}}$ is called a \textbf{universal
cover} over $X$ for the \'{e}tale fundamental group $\pi _{1}^{et}\left( X\right) $, denoted by $
\left( X_{\Omega _{et}},p_{X}\right) .$

\begin{proof}
Let $K$ be the function field $k(X)$. Take any $\omega$ in the field $K^{un}$. By \emph{Definition 2.4} and \emph{Lemma 3.4} it is easily seen that every conjugate of $\omega$ over $K$ is also contained in $K^{un}$. This proves that $K^{un}$ is a Galois extension of $K$.

It is seen that $\mathcal{C}_{X_{\Omega_{et}}}$ is the unique maximal affine patching of the scheme $X_{\Omega _{et}}$. From \emph{Lemma 3.5} it is seen that $X_{\Omega _{et}}$ is quasi-galois closed over $X$.
Then it is immediate from  \emph{Lemma 3.2}.
\end{proof}

\section{Monodromy Action, I}

We have the following computations of the \'{e}tale fundamental group of an arithmetic variety.

\begin{lemma}
\emph{(\cite{An4*})}
Fixed any arithmetic variety $X$. Then there exists an isomorphism
$$
\pi _{1}^{et}\left( X\right) \cong Gal\left( {k(X)}^{un}/k\left( X\right)
\right).
$$
\end{lemma}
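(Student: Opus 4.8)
The plan is to identify $\pi_1^{et}(X)$ with the Galois group of the pro-\'etale cover $p_X : X_{\Omega_{et}}\to X$ furnished by \emph{Lemma 3.6}, and then to compute the latter by means of the geometric-model lemmas of \S 3.1. So first I would recall the Grothendieck description of the \'etale fundamental group: fixing the geometric point $\bar x = Spec(k(X)^{sep})$ over the generic point of $X$, one has a canonical isomorphism
$$\pi_1^{et}(X) \cong \varprojlim_{X'} Aut(X'/X),$$
where $X'$ ranges over the cofiltered system of connected finite \'etale Galois covers of $X$ equipped with compatible geometric points (each such $X'$ being an integral scheme, after the harmless reduction to the normal case, or via the tuning scheme of \emph{Lemma 3.1}).

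The core of the argument is then the dictionary between this system and the finite unramified Galois extensions of $K:=k(X)$. By \emph{Definition 2.8}$(i)$, every connected finite \'etale Galois cover $X'\to X$ yields the finite unramified Galois extension $k(X')/K$; conversely every finite unramified Galois extension $L/K$ is, by definition, realized by such a cover with $k(X')=L$. Moreover each $X'\to X$ is quasi-galois closed with $k(X')/K$ canonically Galois, so \emph{Lemma 3.2} (and, when $\dim X'=\dim X$, \emph{Lemma 3.4}) gives $Aut(X'/X)\cong Gal(k(X')/K)$, compatibly with the transition maps of the two systems. Hence
$$\pi_1^{et}(X)\cong \varprojlim_{L} Gal(L/K),$$
the limit being over the finite unramified Galois subextensions $L$ of $K^{sep}/K$.

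It remains to observe that this inverse limit equals $Gal(K^{un}/K)$. By definition $K^{un}$ is the union of all finite unramified subextensions of $K^{al}/K$, which are separable, so $K^{un}\subseteq K^{sep}$; by \emph{Remark 2.10} (transitivity of unramifiedness) and the fact—established inside the proof of \emph{Lemma 3.6}—that every $K$-conjugate of an unramified element is again unramified, $K^{un}$ is Galois over $K$ and is the compositum of the finite unramified Galois subextensions $L$ above. Therefore $Gal(K^{un}/K)=\varprojlim_L Gal(L/K)\cong \pi_1^{et}(X)$, as claimed. (Equivalently, one may argue straight from \emph{Lemma 3.6}: $p_X$ exhibits $X_{\Omega_{et}}$ as the projective limit of the geometric models of the finite unramified subextensions of $K^{un}/K$, each finite \'etale over $X$, so $X_{\Omega_{et}}$ is the universal pro-\'etale cover and $\pi_1^{et}(X)\cong Aut(X_{\Omega_{et}}/X)\cong Gal(K^{un}/K)$ by \emph{Lemma 3.2}.)

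The hard part will be the equivalence between the notion of unramified extension used here (\emph{Definition 2.8}, which allows the geometric model to be an \emph{arbitrary} arithmetic variety with the prescribed function field, possibly of larger dimension) and ``unramified relative to $X$ itself'', i.e.\ arising from a finite \'etale cover of the given scheme $X$; one must know that finite \'etale covers of $X$ are controlled by $K=k(X)$ alone (a Zariski--Nagata purity / normalization statement) and verify cofinality of the two inverse systems. This is precisely what the results of \cite{An2}--\cite{An7} are meant to supply, so it should be invoked explicitly rather than taken for granted. A secondary, routine point is the reduction ensuring that the Galois covers $X'$ are integral with $k(X')/K$ Galois, so that \emph{Lemma 3.2} and \emph{Lemma 3.4} genuinely apply.
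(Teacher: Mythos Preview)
The paper does not give a proof of this lemma here: it is imported wholesale from \cite{An4*}, so there is nothing in the present text to compare your argument against. Your reconstruction is nevertheless a plausible outline of what such a proof should contain, and it uses the right tools from \S 3 (the geometric-model isomorphisms of \emph{Lemmas 3.2} and \emph{3.4}, and the universal cover of \emph{Lemma 3.6}). Note also that your parenthetical alternative---showing $X_{\Omega_{et}}$ is the universal pro-\'etale cover and then applying \emph{Lemma 3.2}---is precisely the order of argument the paper uses in \emph{Lemma 4.2}, except that there the present lemma is already assumed as input rather than derived.

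You have correctly put your finger on the genuine difficulty. \emph{Definition 2.8} calls $L/K$ unramified if \emph{some} pair of arithmetic varieties with those function fields is linked by a finite \'etale Galois cover; it says nothing about the given scheme $X$. To identify $\pi_1^{et}(X)$ with $Gal(K^{un}/K)$ one must know that the system of connected finite \'etale Galois covers of the fixed $X$ is cofinal with the system of finite unramified Galois subextensions of $K^{un}/K$ in this sense---equivalently, that the set of such subextensions depends only on $K$ and not on the choice of model. This is a nontrivial purity-type statement, and as you say it must be invoked explicitly from the companion papers rather than slipped past. The same issue recurs in your alternative route: one needs that $X_{\Omega_{et}}\to X$ is genuinely a pro-\'etale cover of $X$, not merely a quasi-galois closed integral variety with function field $K^{un}$, and \emph{Lemma 3.6} as stated does not assert this.
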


\begin{lemma}
For any arithmetic variety $X$, there is an isomorphism
$$Aut\left( X_{\Omega _{et}}/X\right)
 \cong \pi _{1}^{et}\left( X\right)$$
 where $\left( X_{\Omega _{et}},p_{X}\right)$ is a universal cover for the group $\pi_{1}^{et}(X)$.
\end{lemma}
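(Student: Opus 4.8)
The plan is to chain together the results already collected in \S 3.
First I would recall from \emph{Lemma 3.6} that the universal cover
$(X_{\Omega_{et}}, p_X)$ satisfies the four listed properties: in
particular $k(X_{\Omega_{et}}) = k(X)^{un}$, the morphism $p_X$ is affine,
the extension $k(X_{\Omega_{et}})/k(X)$ is Galois, and $X_{\Omega_{et}}$ is
quasi-galois closed over $X$ by $p_X$. These are exactly the hypotheses of
\emph{Lemma 3.2} (the ``Geometric model'' lemma): $X_{\Omega_{et}}$ and $X$
are integral varieties, $X_{\Omega_{et}}$ is quasi-galois closed over $X$
by the surjective morphism $p_X$, and $k(X_{\Omega_{et}})$ is canonically
Galois over $k(X)$. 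Hence \emph{Lemma 3.2} applies and yields a group
isomorphism
\begin{equation*}
Aut\left( X_{\Omega_{et}}/X\right) \cong Gal\left( k\left( X_{\Omega_{et}}\right) / k\left( X\right)\right).
\end{equation*}

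Next I would substitute $k(X_{\Omega_{et}}) = k(X)^{un}$ from the first
bullet of \emph{Lemma 3.6}, giving
\begin{equation*}
Aut\left( X_{\Omega_{et}}/X\right) \cong Gal\left( k(X)^{un} / k\left( X\right)\right).
\end{equation*}
Finally I would invoke \emph{Lemma 4.1}, which identifies the \'{e}tale
fundamental group $\pi_1^{et}(X)$ with $Gal\left( k(X)^{un}/k(X)\right)$,
and compose the two isomorphisms to conclude
$Aut\left( X_{\Omega_{et}}/X\right) \cong \pi_1^{et}(X)$, as desired. One
should check that the isomorphism of \emph{Lemma 3.2} and that of
\emph{Lemma 4.1} are compatible (both are the canonical restriction-to-the-function-field
map, or its inverse), so that the composite is the natural one; but since
the statement only claims the existence of an isomorphism, this is a minor
point.

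The main obstacle, such as it is, is bookkeeping rather than mathematics:
one must verify that the canonical Galois hypothesis of \emph{Lemma 3.2} is
genuinely met, i.e.\ that the extension $k(X)^{un}/k(X)$ is Galois in the
precise sense of \S 2.2 (that $k(X)$ is the fixed field of the full Galois
group). This is essentially the content of the argument already given in
the proof of \emph{Lemma 3.6}, where one shows using \emph{Definition 2.4}
and \emph{Lemma 3.4} that every conjugate over $k(X)$ of an element of
$k(X)^{un}$ again lies in $k(X)^{un}$; I would simply reuse that
observation. Beyond this, the proof is a two-line concatenation of
\emph{Lemma 3.6}, \emph{Lemma 3.2}, and \emph{Lemma 4.1}.
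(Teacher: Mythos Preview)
Your proposal is correct and follows essentially the same route as the paper's own proof: use the properties of the universal cover (Lemma 3.6) to verify the hypotheses of Lemma 3.2, obtain $Aut(X_{\Omega_{et}}/X)\cong Gal(k(X)^{un}/k(X))$, and then apply Lemma 4.1. The paper's argument is just a terser version of exactly what you wrote.
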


\begin{proof}
As $ k\left( X_{\Omega _{et}}\right)=k(X)^{un}$ is a Galois extension over $k(X)$, by \emph{Lemma 3.2} we have $Aut\left( X_{\Omega _{et}}/X\right) \cong Gal\left( k\left( X_{\Omega
_{et}}\right) /k\left( X\right) \right)$. Then it is immediate from \emph{Lemma 4.1} above.
\end{proof}

Now let $X$ and $Y$ be two arithmetic varieties such that $k\left( Y\right) $ is
contained in an algebraic closure $\Omega $ of the function field $k\left( X\right) .$

Fixed a group homomorphism $$\sigma :\pi _{1}^{et}\left( X\right) \rightarrow
\pi _{1}^{et}\left( Y\right) .$$ By \emph{Lemma 4.2} we have a group homomorphism, namely $$
\sigma :Aut\left( X_{\Omega _{et}}/X\right) \rightarrow Aut\left( Y_{\Omega
_{et}}/Y\right) .$$

\begin{lemma}
\emph{\textbf{(Monodromy action)}}
Assume that $\left( X_{\Omega _{et}},p_{X}\right)$ and $\left( Y_{\Omega _{et}},p_{Y}\right)$ are the universal covers for the groups $\pi_{1}^{et}(X)$ and $\pi_{1}^{et}(Y)$, respectively.
Fixed a group homomorphism $$\sigma :Aut\left( X_{\Omega
_{et}}/X\right) \rightarrow Aut\left( Y_{\Omega _{et}}/Y\right) .$$ Then
there is a bijection $$\tau :Hom\left( X,Y\right) \to Hom\left( X_{\Omega _{et}},Y_{\Omega _{et}}\right) , f\mapsto f_{et}$$ between sets given in a canonical manner:
\begin{itemize}
\item Let $f\in Hom\left( X,Y\right)$. Then the map $$g\left( x_{0}\right)
\longmapsto \sigma \left( g\right) \left( h\left( x_{0}\right) \right) $$
defines a morphism $$f_{et}:X_{\Omega _{et}}\rightarrow Y_{\Omega _{et}}$$
for any $x_{0}\in X$ and any $g\in Aut\left( X_{\Omega _{et}}/X\right) .$

\item Let $f_{et}\in Hom\left( X_{\Omega _{et}},Y_{\Omega _{et}}\right)$. Then the map $$
p_{X}\left( x\right) \longmapsto p_{Y}\left( f_{et}\left( x\right) \right)
$$ defines a morphism $$f:X\rightarrow Y$$ for any $x\in X_{\Omega _{et}}.$
\end{itemize}
In particular, we have $$f\circ p_{X}=p_{Y}\circ f_{et}.$$
\end{lemma}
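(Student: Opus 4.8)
The plan is to verify that the two assignments described in the statement are well-defined morphisms of schemes and that they are mutually inverse, so that $\tau$ is a bijection. I would organize the argument around the fact (Lemma 3.6, Lemma 3.4) that $X_{\Omega_{et}}$ and $Y_{\Omega_{et}}$ are quasi-galois closed over $X$ and $Y$ with function fields $k(X)^{un}$ and $k(Y)^{un}$, together with the isomorphisms $Aut(X_{\Omega_{et}}/X)\cong Gal(k(X)^{un}/k(X))$ and similarly for $Y$ (Lemma 3.2, Lemma 4.2). The key observation is that both covers are built by the explicit gluing construction preceding Lemma 3.6, so a morphism $X\to Y$ is controlled by its behavior on function fields and on each affine local chart.

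First I would make the forward map precise. Given $f\in Hom(X,Y)$, one has the field inclusion $f^{\#}:k(Y)\hookrightarrow k(X)$; since $k(X)^{un}$ is Galois over $k(X)$ and $k(Y)^{un}$ is unramified over $k(Y)$, the composite $k(Y)\hookrightarrow k(X)\hookrightarrow k(X)^{un}$ extends (using the group homomorphism $\sigma$ at the level of Galois groups, via Lemma 4.2) to a field homomorphism $k(Y)^{un}\to k(X)^{un}$ compatible with the actions of $Aut(X_{\Omega_{et}}/X)$ and $Aut(Y_{\Omega_{et}}/Y)$. Concretely, the prescription $g(x_0)\mapsto \sigma(g)(h(x_0))$ — where $h$ is the morphism on covers induced by $f$ on the chosen local charts via the inclusions $B_V\subseteq A_V$ — respects the equivalence relation $R_\Sigma$ defining each quotient, because two points are identified exactly when their prime ideals in $L$ agree, and the map on rings is a ring homomorphism into $\Omega$. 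Checking that this descends to a well-defined morphism $f_{et}:X_{\Omega_{et}}\to Y_{\Omega_{et}}$ of schemes, chart by chart, is the routine but essential verification; the compatibility $f\circ p_X = p_Y\circ f_{et}$ is then immediate from the formula, since applying $p_Y$ forgets the $\sigma(g)$-twist and $p_X$ forgets the $g$-twist.

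Next I would construct the backward map. Given $f_{et}\in Hom(X_{\Omega_{et}},Y_{\Omega_{et}})$, the rule $p_X(x)\mapsto p_Y(f_{et}(x))$ is well-defined as a map of underlying spaces provided $f_{et}$ carries each fiber of $p_X$ into a fiber of $p_Y$, which holds because $f_{et}$ is equivariant with respect to $\sigma$ and $Aut(X_{\Omega_{et}}/X)$ acts transitively on the relevant fibers (quasi-galois closedness, Lemma 3.2, Lemma 3.3 on the quotient $\mathcal{O}_Y\cong (p_Y)_*(\mathcal{O}_{Y_{\Omega_{et}}})^{Aut}$). Using Lemma 3.3 again, the induced map on structure sheaves is obtained by passing to $Aut$-invariants, so one gets an actual morphism $f:X\to Y$. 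Finally, that the two constructions are inverse follows by tracing the formulas: starting from $f$, building $f_{et}$, and then reading off the induced $f$ returns $p_Y\circ f_{et}\circ(\text{section of }p_X) = f\circ p_X\circ(\text{section}) = f$; and the reverse composition is handled symmetrically using uniqueness of the equivariant lift.

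The main obstacle I expect is the well-definedness and equivariance of the forward map $f_{et}$ — specifically, showing that the assignment $g(x_0)\mapsto\sigma(g)(h(x_0))$ does not depend on the representation of a point of $X_{\Omega_{et}}$ in the form $g(x_0)$, and genuinely glues across the different local charts $Spec(A_V)$ in the cover. This rests on the interplay between the concrete gluing construction (the equivalence relation $R_\Sigma$) and the abstract Galois-theoretic statement that $\sigma$ on $Aut(X_{\Omega_{et}}/X)\cong Gal(k(X)^{un}/k(X))$ is compatible with the field embedding $f^{\#}$; I would isolate this as the core lemma and reduce everything else to it. The transitivity of the automorphism-group action on fibers, needed for the backward direction, is the second delicate point, but it follows from quasi-galois closedness as recorded in Lemmas 3.2–3.4.
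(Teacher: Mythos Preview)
Your approach is essentially the paper's: the paper's entire proof is the single line ``It is immediate from \emph{Lemmas 3.2--3, 4.2},'' and your proposal is precisely a fleshing-out of what that citation would mean in practice --- using the Galois/automorphism identification (Lemma~3.2, Lemma~4.2) to build and control the lift $f_{et}$, and the quotient description of $\mathcal{O}_Y$ (Lemma~3.3) to descend $f_{et}$ back to $f$. You have correctly isolated the two nontrivial checks (well-definedness of $g(x_0)\mapsto\sigma(g)(h(x_0))$ and fiber-transitivity for the descent), which the paper leaves implicit.
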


\begin{proof}
It is immediate from \emph{Lemmas 3.2-3,4.2}.
\end{proof}

\section{\emph{sp}-Completion}

In this section we will use Weil's theory of specializations (see \cite{An1} for detail) to give the completion of rational maps between schemes.

\subsection{Definition for specializations}

Let $E$ be a topological
space $E$ and $x,y\in E$. If $y$ is in the closure $\overline{\{x\}}$, $y$ is said to be a \textbf{
specialization} of $x$ (or, $x$ is said to be a \textbf{generalization} of $
y $) in $E$, denoted by $x\rightarrow y$. Put $Sp\left( x\right)
=\{y\in E\mid x\rightarrow y\}$.
 It is evident that $Sp\left( x\right) =\overline{\{x\}}$ is an irreducible closed
 subset in $E$.

 If $x\rightarrow
y$ and $y\rightarrow x$ both hold in $E$, $y$ is said to be a \textbf{generic
specialization} of $x$ in $E$, denoted by $x\leftrightarrow y$. The point $x$
is said to be \textbf{generic} (or \textbf{initial}) in $E$ if we have $
x\leftrightarrow z$ for any $z\in E$ such that $z\rightarrow x$. And $x$ is said to be
\textbf{closed} (or \textbf{final}) if we have $x\leftrightarrow z$ for any $
z\in E$ such that $x\rightarrow z.$ We say that $y$ is a
\textbf{closest specialization}
of $x$ in $X$ if either $z=x$ or $z=y$ holds for any $z\in X$ such that $
x\rightarrow z$ and $z\rightarrow y$.

\subsection{Any specialization is contained in an affine open set}

Let $E=Spec\left( A\right) $ be an affine scheme. For any point
$z\in Spec\left( A\right)$, denote by $j_{z}$ the corresponding
prime ideal in $A$. Then we have a specialization
$x\rightarrow y$ in $Spec\left( A\right) $ if and only if
$j_{x}\subseteq j_{y}$ holds in $A$. Hence, there is a generic
specialization $x\leftrightarrow y$ in $Spec\left( A\right) $ if and
only if $x=y$ holds.

Now consider a scheme $X$.

\begin{lemma}
\emph{(\cite{An1})}
For any points $x,y \in X$, we have $x\leftrightarrow y$ in $X$ if and
only if $x=y$.
\end{lemma}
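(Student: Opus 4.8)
The plan is to reduce to the affine case already recorded just above the statement, namely that in $Spec(A)$ one has $x\rightarrow y$ if and only if $j_{x}\subseteq j_{y}$, so that $x\leftrightarrow y$ holds in $Spec(A)$ exactly when $j_{x}=j_{y}$, i.e. exactly when $x=y$. The implication ($\Leftarrow$) is trivial: if $x=y$ then $x\in\overline{\{x\}}$ gives $x\leftrightarrow x$. So the content is the implication ($\Rightarrow$).

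For ($\Rightarrow$), suppose $x\leftrightarrow y$ in $X$, so that both $x\rightarrow y$ and $y\rightarrow x$ hold. First I would pick an affine open chart $U=Spec(A)$ of the scheme $X$ with $y\in U$; such a chart exists since $X$ is a scheme. Because $x\rightarrow y$ means $y\in\overline{\{x\}}$, every open neighbourhood of $y$ meets $\{x\}$, hence contains $x$; in particular $x\in U$ as well. Thus both points lie in the single affine open $U$.

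Next I would invoke the elementary compatibility of closures with an open subspace: for any $z\in U$ one has $\overline{\{z\}}^{\,X}\cap U=\overline{\{z\}}^{\,U}$. Applying this with $z=x$ and with $z=y$, the relations $x\rightarrow y$ and $y\rightarrow x$ in $X$ become $x\rightarrow y$ and $y\rightarrow x$ in $U=Spec(A)$, i.e. $x\leftrightarrow y$ in $Spec(A)$. By the affine case this yields $j_{x}=j_{y}$, hence $x=y$, which completes the proof. The argument is nothing more than the fact that the underlying space of a scheme is $T_{0}$; the only point needing a touch of care is the passage between closures in $X$ and in the affine open $U$, which is where I would be most careful, though it is routine.
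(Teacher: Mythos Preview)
Your proof is correct and follows essentially the same route as the paper's: both arguments pick an affine open set containing one of the two points, use the specialization relation to force the other point into that same affine open, and then invoke the affine case where $x\leftrightarrow y$ is equivalent to $j_{x}=j_{y}$. The only cosmetic difference is that the paper starts with an affine open $U$ around $x$ and argues $y\in U$ via $Sp(x)=Sp(y)$, whereas you start around $y$ and argue $x\in U$ directly from $y\in\overline{\{x\}}$; your write-up is in fact a bit more explicit about the passage of closures from $X$ to $U$.
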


\begin{proof}
$\Leftarrow$. Trivial. Prove $\Rightarrow$. Assume $x\leftrightarrow
y$ in $X$. Let $U$ be an affine open set of $X$ containing $x$.
From $x\leftrightarrow y$ in $X$, we have ${Sp}(x)={Sp}(y)$; then $x\in{Sp}
(x)\bigcap U={Sp}(y)\bigcap U \ni y$; hence, $x\leftrightarrow y $
in $U$. It follows that $x=y$ holds in $U$ (and of course in $
X$).
\end{proof}

\begin{lemma}
\emph{(\cite{An1})}
Fixed any specialization $x\rightarrow y$ in $X$. Then there is an
affine open subset $U$ of $X$ such that the two points $x$ and $y$ are both
contained in $U$.

In particular, any affine open set in $X$
containing the specialization $y$ must contain the generalization $x$.
\end{lemma}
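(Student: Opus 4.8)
The plan is to reduce the statement about an arbitrary scheme $X$ to the affine case already handled in the preceding paragraph (the observation that in $\mathrm{Spec}(A)$ a specialization $x\to y$ corresponds to an inclusion of prime ideals $j_x\subseteq j_y$). First I would pick, by the definition of a scheme, an affine open subset $U_y=\mathrm{Spec}(A)$ of $X$ containing the specialization point $y$. The task is then to show that $x$ also lies in $U_y$. Since $U_y$ is open, its complement $Z=X\setminus U_y$ is closed, so it suffices to show $x\notin Z$; and because $y\in\overline{\{x\}}$, if $x$ were in the closed set $Z$ we would get $\overline{\{x\}}\subseteq Z$, forcing $y\in Z$, a contradiction. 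This already gives $x\in U_y$, hence both points lie in the single affine open set $U=U_y$, which proves the ``in particular'' clause at the same time.

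Having placed both $x$ and $y$ in one affine open $U=\mathrm{Spec}(A)$, I would then note that the statement is purely about the existence of such a $U$, so we are essentially done; but to make the link with the affine discussion explicit I would remark that inside $U$ the specialization $x\to y$ persists (closures only shrink under passing to an open subspace, and $y\in\overline{\{x\}}^X\cap U=\overline{\{x\}}^U$), so $j_x\subseteq j_y$ in $A$. This is the content one typically wants downstream, and it records that the affine open $U$ genuinely witnesses the specialization.

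For the very first sentence of the statement — ``there is an affine open subset $U$ of $X$ such that $x$ and $y$ are both contained in $U$'' — the argument above is complete. The only mild subtlety, and the step I would be most careful about, is the claim that a specialization of a point in a closed set remains in that closed set: this is exactly the statement $\overline{\{x\}}\subseteq Z$ whenever $x\in Z$ and $Z$ is closed, i.e. $y\in Sp(x)=\overline{\{x\}}\subseteq \overline{Z}=Z$. Everything else is bookkeeping: unwinding the definition of $Sp(\cdot)$ from §5.1, and using that an affine open neighbourhood of $y$ exists by the definition of a scheme. I do not expect any real obstacle; the proof is a short topological observation, and I would write it in essentially the three lines sketched here.
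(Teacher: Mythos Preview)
Your proposal is correct and follows essentially the same approach as the paper: choose an affine open neighbourhood $U$ of $y$ and use the basic fact that $y\in\overline{\{x\}}$ forces $x\in U$. The paper phrases this last step via the language of limit points (assuming $x\neq y$, any open set containing $y$ meets $\{x\}\setminus\{y\}=\{x\}$), whereas you phrase it by passing to the closed complement, but the underlying topological observation is identical.
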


\begin{proof}
Assume $x \not= y$. Then
$y$ is a limit point of the one-point set $\{x\}$ since $y$ is
contained in the topological closure $Sp(x)$ of $\{x\}$. Let
$U\subseteq X$ be an open set containing $y$. We have $U \bigcap
(\{x\}\setminus \{y\})\neq \emptyset$ by the definition for a limit
point of a set (see any standard textbook for general topology). We
choose $U$ to be an affine open set of $X$.
\end{proof}

\subsection{Any morphism preserves specializations}

Let $f:E\rightarrow F$ be a map of spaces. The map $f$ is said to be \textbf{
specialization-preserving} if there is a specialization $f\left(
x\right) \rightarrow f\left( y\right) $ in $F$ for any
specialization $x\rightarrow y$ in $E$.

\begin{lemma}
\emph{(\cite{An1})}
Any morphism between schemes is specialization-preserving.
\end{lemma}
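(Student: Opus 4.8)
The statement to prove is that any morphism between schemes is specialization-preserving. This is essentially a standard fact, and the plan is to reduce it to the affine case using the two lemmas just established.

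First I would recall that a morphism of schemes $f:X\to Y$ is by definition a continuous map on underlying spaces together with a compatible sheaf map; in particular the underlying map of topological spaces is continuous. The key elementary observation is that any continuous map of topological spaces carries the closure of a point into the closure of its image: if $x\rightarrow y$, i.e. $y\in\overline{\{x\}}$, then since $f$ is continuous we have $f(\overline{\{x\}})\subseteq\overline{f(\{x\})}=\overline{\{f(x)\}}$, hence $f(y)\in\overline{\{f(x)\}}$, which is exactly the specialization $f(x)\rightarrow f(y)$ in $Y$. So at the level of topological spaces there is genuinely nothing to do beyond invoking continuity and the definition of $Sp$.

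If one instead wants a scheme-theoretic argument that stays close to the affine language used in \S 5.2, I would proceed as follows. Given a specialization $x\rightarrow y$ in $X$, by \emph{Lemma 5.2} there is an affine open set $U=\operatorname{Spec}(A)$ of $X$ containing both $x$ and $y$. Choose an affine open set $V=\operatorname{Spec}(B)$ of $Y$ with $f(U)\subseteq V$ (possible since $f(y)$ has an affine open neighbourhood $V$, and by the continuity of $f$ we may shrink $U$ to lie in $f^{-1}(V)$; note that by the last clause of \emph{Lemma 5.2} shrinking $U$ around $y$ keeps $x$ inside it, since any affine open containing $y$ must contain $x$). Then $f|_U$ corresponds to a ring homomorphism $\varphi:B\to A$, and the specialization $x\rightarrow y$ in $\operatorname{Spec}(A)$ means $j_x\subseteq j_y$ for the corresponding primes. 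Pulling back along $\varphi$ gives $\varphi^{-1}(j_x)\subseteq\varphi^{-1}(j_y)$, i.e. $j_{f(x)}\subseteq j_{f(y)}$ in $B$, which by the criterion recalled at the start of \S 5.2 says precisely $f(x)\rightarrow f(y)$ in $V$, hence in $Y$.

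There is essentially no obstacle here: the content is entirely the continuity of the underlying map together with the affine-local description of specialization via inclusion of prime ideals. The only minor point requiring care is the reduction to affines — making sure that when one restricts to an affine chart $U$ around $y$, the generalization $x$ is still in $U$, and that the image lands in a single affine chart $V$ of $Y$ — and both of these are handled cleanly by \emph{Lemma 5.2} and continuity. I would therefore present the short topological argument as the main proof, perhaps with a one-line remark indicating the affine translation for readers who prefer it.
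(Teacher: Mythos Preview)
Your proposal is correct. The paper's own proof is a single line, ``It is immediate from \emph{Lemma 5.2}'', which is exactly your second (affine-local) argument: use \emph{Lemma 5.2} to put $x$ and $y$ in a common affine chart, then read off the inclusion of primes under the ring map. Your first argument, however, is genuinely different and more elementary: it uses only continuity of the underlying map and the equality $Sp(x)=\overline{\{x\}}$, so \emph{Lemma 5.2} is not needed at all. The advantage of your topological route is that it works for arbitrary continuous maps of topological spaces and avoids the bookkeeping of shrinking charts; the advantage of the paper's route is that it stays within the affine/prime-ideal language set up in \S 5.2 and makes the statement a direct corollary of the preceding lemma.
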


\begin{proof}
It is immediate from  \emph{Lemma 5.2}.
\end{proof}

\subsection{The graph functor $\Gamma$ from schemes to graphs}

We have such a covariant functor from the category of schemes to
the category of (combinatorial) graphs. See \cite{tut} for preliminaries on graph theory.

\begin{lemma}
\emph{(\textbf{Graph functor} \cite{An1})}
There exists a covariant functor $\Gamma$ from the category $Sch$ of
schemes to the category $Grph$ of graphs given in such a natural
manner.
\begin{itemize}
\item To any scheme $X$, assign the graph $\Gamma(X)$ in which the
vertex set is the set of points in the underlying space $X$ and the
edge set is the set of specializations in $X$.

Here, for any points $x,y \in X$, we say that there is an edge from
$x$ to $y$ if and only if there is a specialization $x\rightarrow y$
in $X$.

\item To any scheme morphism $f:X\rightarrow Y$, assign the graph
homomorphism $\Gamma(f): \Gamma(X)\rightarrow \Gamma(Y)$.

Here, any specialization $x \rightarrow y$ in the scheme $X$ as
an edge in $\Gamma(X)$, is mapped by $\Gamma(f)$ into the
specialization $f(x)\rightarrow f(y)$ as an edge in $\Gamma(Y)$.
\end{itemize}
\end{lemma}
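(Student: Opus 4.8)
The plan is to verify directly that $\Gamma$, as described, is a well-defined covariant functor from $Sch$ to $Grph$, splitting the check into three routine pieces: that $\Gamma(X)$ is a legitimate object of $Grph$, that $\Gamma(f)$ is a legitimate morphism of $Grph$, and that $\Gamma$ respects identities and composition. Throughout I would fix the convention (as in \cite{tut}) that an object of $Grph$ is a vertex set together with a set of directed edges, loops allowed, and that a graph homomorphism is a map of vertex sets carrying each edge to an edge, possibly collapsing it to a loop.

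First I would check that $\Gamma(X)$ is a graph. Its vertex set is the underlying set of $X$, and I would declare an edge from $x$ to $y$ precisely when $x\rightarrow y$, i.e. when $y\in\overline{\{x\}}$. This is well defined because the specialization relation is reflexive — every point lies in its own closure, so there is a loop at each vertex — and transitive, since $x\rightarrow y$ and $y\rightarrow z$ give $\overline{\{z\}}\subseteq\overline{\{y\}}\subseteq\overline{\{x\}}$, hence $x\rightarrow z$. (By \emph{Lemma 5.1} this relation is in fact a partial order on the points of $X$, but only its being a relation is needed here.)

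Next I would check that, for a morphism of schemes $f:X\rightarrow Y$, the map $\Gamma(f)$ — taken on vertices to be the underlying set map of $f$ — is a graph homomorphism. The only thing to verify is that edges are sent to edges: if $x\rightarrow y$ in $X$, then $f(x)\rightarrow f(y)$ in $Y$. This is exactly the content of \emph{Lemma 5.4} (every scheme morphism is specialization-preserving), which itself rests on \emph{Lemmas 5.2--5.3}; when $f(x)=f(y)$ the image is the loop at $f(x)$, which is present in $\Gamma(Y)$ by reflexivity of specialization, so nothing goes wrong. Functoriality is then automatic, since on underlying sets $\Gamma(f)$ is nothing but the set map of $f$: thus $\Gamma$ sends $\mathrm{id}_{X}$ to the identity homomorphism, and $\Gamma(g\circ f)=\Gamma(g)\circ\Gamma(f)$ for composable $f:X\rightarrow Y$ and $g:Y\rightarrow Z$, because the set map of $g\circ f$ is the composite of those of $g$ and $f$.

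The main — and really the only — obstacle is the edge-to-edge property of $\Gamma(f)$, i.e. that morphisms of schemes preserve specializations; but this has already been recorded in \emph{Lemma 5.4}, so the remaining work is pure bookkeeping. If one wanted to be scrupulous, the last genuinely delicate point is the choice of the target category: $Grph$ must allow loops and must allow graph homomorphisms to collapse an edge, for otherwise one would need the extra assertion — false in general — that $f$ is injective on points and never collapses a nontrivial specialization.
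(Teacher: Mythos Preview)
Your argument is essentially the paper's own, whose proof reads in full: ``It is immediate from \emph{Lemmas 5.2--3}.'' One correction, however: the specialization-preserving property of scheme morphisms is \emph{Lemma 5.3} in the paper, not \emph{Lemma 5.4} --- the latter is precisely the statement you are proving, so your citation as written is circular; likewise the parenthetical ``which itself rests on \emph{Lemmas 5.2--5.3}'' should read ``rests on \emph{Lemma 5.2}.'' With the numbering fixed, your proof is simply a more explicit unpacking of the paper's one-line appeal, with the functoriality axioms (identities and composition) spelled out.
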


\begin{proof}
It is immediate from \emph{Lemmas 5.2-3}.
\end{proof}

The above functor $\Gamma: Sch\to Grph$ is said to be the \textbf{graph functor}.

\begin{remark}
There are many beautiful  graphs $\Gamma(X)$ associated with schemes $X$. For example, it is easily seen that
\begin{itemize}
\item $\Gamma(Spec(\mathbb{Z}))$  is a star-shaped graph;

\item $\Gamma(Spec(\mathbb{Z}[t]))$ is a graph of
infinitely many loops.
\end{itemize}
\end{remark}

\begin{remark}
By the graph functor $\Gamma$, many invariants that are defined on graphs can be introduced into schemes in a natural manner, for example,
the discrete Morse theory, the Kontsevich's graph
homology theory, etc.
\end{remark}

\subsection{\emph{sp}-completion}

In virtue of the graph functor $\Gamma$, we can give the completion
of a rational maps between schemes, which will be applied to the proofs of the main theorems of the paper.

Let's recall basic definitions for graphs (see \cite{tut}).
Fixed a graph $X$. Let $V(X)$ be the \textbf{set of vertices} in $X$ and  $E(X)$ the \textbf{set of edges} in $X$.

Let $Y$ be a graph. Then $Y$ is said to be a \textbf{subgraph} of $X$ if the following conditions are satisfied:
\begin{itemize}
\item $V(X)\supseteq V(Y)$;

\item $E(X)\supseteq E(Y)$;

\item Every $L\in E(Y)$ has the same ends in $Y$ as in $X$.
\end{itemize}

Recall that an \textbf{isomorphism} $t$ from $X$ onto $Y$ is a ordered pair $(t_{V},t_{E})$ satisfying the conditions:
\begin{itemize}
\item $t_{V}$ is a bijection from $V(X)$ onto $V(Y)$;

\item $t_{E}$ is a bijection from $V(E)$ onto $V(E)$;

\item Let $x \in V(X)$ and $L\in E(X)$. Then $x $ is incident with $L$ if and only if $t_{V}(x) \in V(Y)$ is incident with $t_{E}(L)\in E(Y)$.
\end{itemize}

Now we consider the graphs of integral schemes.

\begin{definition}
An integral scheme $X$ is said to be \textbf{$sp-$complete} if $X$ and $Y$ must be essentially equal for any integral scheme $Y$ such that
\begin{itemize}
\item $\Gamma (X)$ is isomorphic to a subgraph of $\Gamma (Y)$;

\item $k(Y)$ is contained in an algebraic closure of $k(X)$.
\end{itemize}
\end{definition}

\begin{remark}
Let $X$ be an $sp-$complete integral variety. It is easily seen that the function field $k(X)$ must be algebraically closed. In such a case, the graph $\Gamma(X)$ is maximal (by set-inclusion).

For example, let $\mathcal{O}$ be the set of all algebraic numbers over $\mathbb{Q}$. Then $Spec(\mathbb{Z}[\mathcal{O}])$ is $sp-$complete.
\end{remark}

For the $sp-$complete, we have the following theorem.

\begin{theorem}
\emph{\textbf{($sp-$completion)}}
For any integral variety $X$, there exists an integral variety $X_{sp}$ and a surjective morphism $\lambda_{X}:X_{sp}\to X$ such that
\begin{itemize}
\item $\lambda_{X}$ is affine;

\item $X_{sp}$ is $sp-$complete;

\item $k(X_{sp})$ is an algebraic closure of $k(X)$;

\item $X_{sp}$ is quasi-galois closed over $X$ by $\lambda_{X}$.
\end{itemize}
\end{theorem}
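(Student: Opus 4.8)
The plan is to repeat, almost verbatim, the construction of the universal cover $(X_{\Omega_{et}},p_X)$ given in \S 3.2, changing one thing only: the unramified closure $k(X)^{un}$ is replaced throughout by the full algebraic closure $\Omega=k(X)^{al}$. Writing $K=k(X)$, I would first invoke \emph{Lemma 3.1} to assume that $X$ carries a maximal reduced affine covering $\mathcal{C}_X=\{(V,\psi_V,B_V)\}$ with values in $\Omega$. Fixing a set $\Delta$ of generators of $\Omega$ over $K$ and putting $G=Gal(\Omega/K)$, for each $(V,\psi_V,B_V)\in\mathcal{C}_X$ I set $A_V=B_V[\Delta_V]$, the subring of $\Omega$ generated over $B_V$ by $\Delta_V=\{\sigma(x):\sigma\in G,\ x\in\Delta\}$, so that $Fr(A_V)=\Omega$. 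Then I form the disjoint union $\Sigma=\coprod_{(V,\psi_V,B_V)\in\mathcal{C}_X}Spec(A_V)$ with its projection to $X$, impose the equivalence relation $x_1\sim x_2\Leftrightarrow j_{x_1}=j_{x_2}$ in $\Omega$, and glue the pieces $Spec(A_V)$ along $\sim$; this yields an integral scheme $X_{sp}$ together with a surjective morphism $\lambda_X:X_{sp}\to X$ and an affine patching $\mathcal{C}_{X_{sp}}=\{(U_V,\varphi_V,A_V)\}$ with $U_V=\lambda_X^{-1}(V)$ and $\varphi_V=\mathrm{id}$. Since each $A_V$ is a domain with fraction field $\Omega$, the scheme $X_{sp}$ is an integral variety.

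Three of the four assertions are then formal. For any $V$ one has $k(X_{sp})=Fr(A_V)=\Omega=k(X)^{al}$, which is the third bullet; in particular $k(X_{sp})$ is algebraically closed (cf.\ \emph{Remark 5.8}). Because $\lambda_X^{-1}(V)=Spec(A_V)$ for every $V$ in the covering $\mathcal{C}_X$, and these $V$ cover $X$, the morphism $\lambda_X$ is affine, the first bullet. For the fourth bullet I would check, exactly as for $\mathcal{C}_{X_{\Omega_{et}}}$ in the proof of \emph{Lemma 3.6}, that $\mathcal{C}_{X_{sp}}$ is the unique maximal affine patching of $X_{sp}$ with values in $\Omega$ (see \cite{An7}) and that it is quasi-galois closed over $X$ by $\lambda_X$ in the sense of \S 2.5; the relevant input is that $\Omega$, being algebraically closed, is quasi-galois over every subfield containing $K$ (\S 2.3), so every conjugate over a ring $B_V$ of a chart ring still lies in $\Omega$ and is accounted for by the construction. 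Then \emph{Lemma 3.5} upgrades this to the statement that $X_{sp}$ itself is quasi-galois closed over $X$ by $\lambda_X$.

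The real content is the second bullet, that $X_{sp}$ is $sp$-complete. Let $Y$ be an integral scheme with $\Gamma(X_{sp})$ isomorphic to a subgraph of $\Gamma(Y)$ and with $k(Y)$ inside an algebraic closure of $k(X_{sp})$; since $k(X_{sp})=\Omega$ is already algebraically closed, this says simply $k(Y)\subseteq\Omega$, and I must show (per \emph{Definition 5.7}) that $X_{sp}$ and $Y$ are essentially equal --- in particular that they have the same underlying space, so that $\Gamma(X_{sp})$ and $\Gamma(Y)$ are actually isomorphic, not merely subgraph-related. My plan is: (i) apply \emph{Lemma 3.1} to replace $Y$ by an isomorphic model with a reduced affine covering with values in $\Omega$; (ii) fix an embedding $\iota$ of $\Gamma(X_{sp})$ onto a subgraph of $\Gamma(Y)$ and, using \emph{Lemmas 5.2 and 5.3} --- every specialization lies in an affine chart, and that chart already contains the generalization --- transport the affine decomposition $\mathcal{C}_{X_{sp}}$ across $\iota$ and recognize it chart by chart inside $Y$; (iii) show $\iota$ is surjective on vertices, by counting maximal chains of mutually related points in the two graphs (which, with $k(Y)\subseteq\Omega$, forces $\dim Y=\dim X$ and $\Omega$ to be an algebraic closure of $k(Y)$ as well) and then observing that a vertex of $\Gamma(Y)$ outside the image of $\iota$ would furnish a prime ideal over some $B_V$ in an $\Omega$-valued chart of $Y$ not met by $Spec(A_V)$ --- impossible, since $A_V$ generates all of $\Omega$ over $B_V$; (iv) conclude that $X_{sp}$ and the model of $Y$ have the same underlying space and the same affine opens, with structure sheaves satisfying the valuation-type comparison of \S 2.4, hence are essentially equal.

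I expect steps (iii) and (iv) to be the main obstacle: one must genuinely rule out that $Y$ carries extra points, and then pin down $\mathcal{O}_Y$ from the purely combinatorial graph isomorphism together with the inclusion $k(Y)\subseteq\Omega$. The leverage is the rigidity of $\mathcal{C}_{X_{sp}}$ as the unique maximal affine patching of $X_{sp}$ with values in $\Omega$ (cf.\ \cite{An7}) and the criterion of \emph{Lemma 3.5}; once the underlying spaces are identified, comparing the structure sheaves reduces to the local claim that two subrings of $\Omega$ with a common fraction field and the same poset of prime ideals satisfy the condition of \S 2.4, which I would verify directly.
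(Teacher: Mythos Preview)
Your construction is word-for-word the paper's own: it repeats the universal-cover procedure of \S 3.2 with $L=K^{al}$ in place of $K^{un}$, and it handles the first, third, and fourth bullets exactly as you do (the paper simply says ``Evidently, it needs only to verify that $X_{sp}$ is $sp$-complete'').

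For $sp$-completeness the paper takes the same route in spirit but is much terser than your plan. Rather than your steps (i)--(iv), it argues by contradiction as follows: suppose some $x_{0}\in Y\setminus X_{sp}$; pass to a \emph{final} (closed) point $z_{0}\in Y$, which by \emph{Lemma 5.2} still lies outside $X_{sp}$; take an affine chart $W=Spec(A_{0})$ of $Y$ containing $z_{0}$; then observe that, by the very construction of $X_{sp}$, the ring $A_{0}\subseteq\Omega$ must sit inside some chart ring $B_{0}=A_{V}$ of $X_{sp}$, so $z_{0}\in Spec(B_{0})\subseteq X_{sp}$ --- a contradiction. This is precisely the content of your step (iii) (``$A_{V}$ generates all of $\Omega$ over $B_{V}$''), but reached without the preliminary dimension count or the chart-by-chart transport of (ii). Your more careful outline is not wrong, and arguably fills in steps the paper leaves implicit (in particular your step (iv) on essential equality of structure sheaves, which the paper does not address at all); but the paper's closed-point shortcut gets to the key ring-containment observation faster.
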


Such an integral scheme $X_{sp}$, is said to be an \textbf{$sp-$completion} of $X$. We will denote this by $(X_{sp},\lambda_{X})$.

\begin{proof}
Let $K=k\left( X\right) $ and $L=K^{al}$. Fixed a set $\Delta $ of generators of the field $L$
over $K$. Put $G=Gal\left( L/K\right) .$
By \emph{Lemma 3.1}, without loss of generality, assume that $X$ has a reduced affine covering $\mathcal{C}_{X}$ with values in $\Omega .$
We choose $\mathcal{C}_{X}$ to be maximal (in the sense of set inclusion).

We will proceed in several steps to give the construction:
\begin{itemize}
\item For any local chart $\left( V,\psi
_{V},B_{V}\right) \in \mathcal{C}_{X}$, define $A_{V}=B_{V}\left[ \Delta _{V}\right] $, where
$
\Delta _{V}=\{\sigma \left( x\right) \in L:\sigma \in G,x\in \Delta \}.
$
Set $i_{V}:B_{V}\rightarrow A_{V}$ to be the inclusion.

\item Define
$$
\Sigma =\coprod\limits_{\left( V,\psi _{V},B_{V}\right) \in \mathcal{C}
_{X}}Spec\left( A_{V}\right)
$$
to be the disjoint union. Let $\pi _{X}:\Sigma \rightarrow X$ be the
projection induced by the inclusions $i_{V}$.

\item Define an equivalence relation $R_{\Sigma }$ in $
\Sigma $ in such a manner:

For any $x_{1},x_{2}\in \Sigma $, we say $x_{1}\sim x_{2}$ if and only if $
j_{x_{1}}=j_{x_{2}}$ holds in $L$.

Here $j_{x}$ denotes the corresponding
prime ideal of $A_{V}$ to a point $x\in Spec\left( A_{V}\right) $.

Let $ X_{sp}$ be the quotient space $\Sigma /\sim $ and let $\pi_{sp}:\Sigma \rightarrow  X_{sp}$ be the
projection of spaces.

\item Define a map $\lambda_{X}:X_{sp}\rightarrow X$ of spaces  by
$
\pi_{sp}\left( z\right) \longmapsto \pi _{X}\left( z\right)
$
for each $z\in \Sigma $.

\item Define
\begin{equation*}
\mathcal{C}_{X_{sp}}=\{\left( U_{V},\varphi _{V},A_{V}\right) \}_{\left( V,\psi
_{V},B_{V}\right) \in \mathcal{C}_{X}}
\end{equation*}%
where $U_{V}=\pi _{X}^{-1}\left( V\right) $ and $\varphi
_{V}:U_{V}\rightarrow Spec(A_{V})$ is the identity map for each $\left(
V,\psi _{V},B_{V}\right) \in \mathcal{C}_{X}$.

Hence, we obtain a scheme, namely $X_{sp}$,
by gluing the affine
schemes $Spec\left( A_{V}\right) $ for all $\left( U_{V},\varphi
_{V},A_{V}\right) \in \mathcal{C}_{X}$ with respect to the equivalence
relation $R_{\Sigma }$. Naturally, $\lambda_{X}$ becomes a morphism of schemes.
\end{itemize}
Evidently, it needs only to verify that $X_{sp}$ is $sp-$complete.

In deed, take any integral scheme $Y$ such that
\begin{itemize}
\item $\Gamma (X)$ is isomorphic to a subgraph of $\Gamma (Y)$;

\item $k(Y)$ is contained in an algebraic closure of $k(X)$.
\end{itemize}

 Hypothesize that there is some $x_{0}\in Y \setminus X_{sp}$. Let $z_{0}\in Y$ be final. By \emph{Lemma 5.2} it is seen that $z_{0}$ is not contained in $X_{sp}$. Evidently, there is an affine open set $W=Spec(A_{0})$ in $Y$ such that $z_{0}\in W$.

 On the other hand, there must be an affine open set $W_{sp}=Spec(B_{0})$ in $X_{sp}$ such that the ring $A_{0}$ is contained in $B_{0}$ from the construction above. It follows that $z_{0}$ must be contained in $W_{sp}$ and hence in $X_{sp}$, which will be in contradiction.
\end{proof}

The $sp-$completions have the following property.

\begin{lemma}
\emph{\textbf{(Uniqueness up to isomorphisms)}}
Let $X$ and $Y$ be integral varieties such that $k(X)=k(Y)$. Then the $sp-$completions $X_{sp}$ and $Y_{sp}$ are essentially equal. In particular,  $X_{sp}$ and $Y_{sp}$ are isomorphic schemes.
\end{lemma}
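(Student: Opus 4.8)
The plan is to reduce the statement to the construction carried out in the proof of Theorem 5.7. First I would observe that the $sp$-completions $X_{sp}$ and $Y_{sp}$ are each built, by that construction, from a maximal reduced affine covering with values in an algebraic closure $L = k(X)^{al} = k(Y)^{al}$; since $k(X) = k(Y)$, the two algebraic closures coincide inside a fixed $\Omega$, and the set $\Delta$ of generators of $L$ over $K \triangleq k(X) = k(Y)$ together with $G = Gal(L/K)$ is the \emph{same} datum in both constructions. Thus the only potential source of discrepancy is the choice of the maximal reduced affine covering $\mathcal{C}_X$ of $X$ versus $\mathcal{C}_Y$ of $Y$, together with the (a priori different) underlying spaces of $X$ and $Y$.

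Next I would invoke Remark 3.10 (and Lemma 3.5): once we pass to tuning schemes via Lemma 3.1 we may assume $X$ and $Y$ each carry the \emph{unique} maximal affine patching with values in $L$, and by the Criterion (Lemma 3.5) the resulting $X_{sp}$, $Y_{sp}$ carry unique maximal affine patchings $\mathcal{C}_{X_{sp}}$, $\mathcal{C}_{Y_{sp}}$ with values in $L$ that are quasi-galois closed over $X$, $Y$ respectively. The key step is then to check that every local chart $(U_V, \varphi_V, A_V)$ of $\mathcal{C}_{X_{sp}}$ is also a local chart of $\mathcal{C}_{Y_{sp}}$ and conversely: an affine open $U = Spec(A)$ with $A \subseteq L$ appears in $\mathcal{C}_{X_{sp}}$ precisely when $A = B[\Delta_V]$ for some local chart $B_V$ of $X$ with $Fr(B_V) \subseteq K$; but whether or not $A$ of this shape is attached depends only on the subrings of $L$ involved, not on the ambient space, because the gluing is performed purely via the equality of prime ideals in $L$ (the relation $R_\Sigma$). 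Hence $\mathcal{C}_{X_{sp}}$ and $\mathcal{C}_{Y_{sp}}$ consist of the same local charts, so the underlying spaces — being the quotients $\Sigma/\!\sim$ indexed by the same prime ideals of $L$ — are canonically identified, and under this identification an open set is affine in $X_{sp}$ iff it is affine in $Y_{sp}$.

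Finally, with the underlying spaces identified, I would verify the ring-level condition in the definition of \emph{essentially equal}: for each affine open $U$ the ring $\mathcal{O}_{X_{sp}}(U)$ and $\mathcal{O}_{Y_{sp}}(U)$ are both of the form $B[\Delta_V]$ with the same $\Delta_V = \{\sigma(x) : \sigma \in G, x \in \Delta\}$, so in fact $\mathcal{O}_{X_{sp}}(U) = \mathcal{O}_{Y_{sp}}(U)$, which is the strongest case ($D_1 = D_2$) of the essential-equality condition. The ``in particular'' clause — that $X_{sp}$ and $Y_{sp}$ are isomorphic as schemes — then follows because essentially equal integral schemes, sharing the same underlying space and the same sheaf on all affine opens (which form a base), have isomorphic structure sheaves, so the identity on underlying spaces upgrades to a scheme isomorphism. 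The main obstacle I anticipate is the second step: making rigorous that the assignment ``which affine subrings of $L$ get glued in'' depends only on $K = k(X)$ and $\Delta$, $G$ and not on the starting scheme — i.e. that the maximality clause in Theorem 5.7's construction, combined with the uniqueness statements of Lemma 3.5 and Remark 3.10, genuinely pins down $\mathcal{C}_{X_{sp}}$ independently of $X$. Once that is in place the rest is bookkeeping.
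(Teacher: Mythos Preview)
Your approach differs from the paper's and leaves open exactly the gap you yourself flag at the end. The paper does not compare the two constructions chart by chart; it invokes Definition~5.7 and Theorem~5.9 directly. By Theorem~5.9 both $X_{sp}$ and $Y_{sp}$ are $sp$-complete with the same algebraically closed function field $L=k(X)^{al}=k(Y)^{al}$; then (cf.\ Remark~5.8, which records that the graph of an $sp$-complete scheme is maximal) one applies the defining property of $sp$-completeness of $X_{sp}$ with $Y_{sp}$ as the test scheme, which forces essential equality outright. The isomorphism clause is not argued in the paper but deferred to an analogous argument in~\cite{An4*}.

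The obstacle you isolate is real, and the mechanism you propose for removing it does not work as stated. The rings $B_V$ arising as local charts in the maximal affine patching of $X$ genuinely depend on $X$, not merely on $K=k(X)$; hence so, a priori, do the rings $A_V=B_V[\Delta_V]$ produced by the construction. The maximality you appeal to (there is no Remark~3.10 --- presumably you mean Remark~2.7; also, the construction sits in Theorem~5.9, not~5.7) is maximality among affine coverings \emph{of the given scheme $X$}, not among all subrings of $L$, so it cannot by itself make the resulting collection of charts independent of the starting scheme. What would close your gap is precisely the statement that no integral scheme with function field $L$ can carry a strictly larger graph than $X_{sp}$ --- i.e.\ $sp$-completeness --- and once you grant that, you are back to the paper's one-line argument. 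In short, your route is not wrong in spirit, but the step you mark as the main obstacle is the whole content of the lemma, and the paper's device of packaging it as Definition~5.7 plus Theorem~5.9 is exactly what lets one avoid the chart-level bookkeeping you set up.
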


\begin{proof}
The essential equality is from \emph{Definition 5.7} and \emph{Theorem 5.9}. The isomorphism can be proved in a manner similar to the proof of \emph{Lemma 2.15} in \cite{An4*}.
\end{proof}

\section{Monodromy Action, II}

Let $X$ and $Y$ be two integral varieties. Put $$G_{X}=Aut(X_{sp}/X);$$ $$G_{Y}=Aut(Y_{sp}/Y).$$

Here $(X_{sp},\lambda_{X})$ and $(Y_{sp},\lambda_{Y})$ are $sp-$completions of $X$ and $Y$, respectively.

Now we can give the monodromy actions on $sp-$compltetions.

\begin{lemma}
\emph{\textbf{(Monodromy action)}}
Suppose that there is a group homomorphism $\sigma: G_{X} \to G_{Y}$. Then there is a bijection $$\tau :Hom\left( X,Y\right) \to Hom\left( X_{sp},Y_{sp}\right) , f\mapsto f_{sp}$$ between sets given in  a canonical manner:
\begin{itemize}
\item Let $f\in Hom\left( X,Y\right)$. Then the map $$g\left( x_{0}\right)
\longmapsto \sigma \left( g\right) \left( h\left( x_{0}\right) \right) $$
defines a morphism $$f_{sp}:X_{sp}\rightarrow Y_{sp}$$
for any $x_{0}\in X$ and any $g\in G_{X} .$

\item Let $f_{sp}\in Hom\left( X_{sp},Y_{sp}\right)$. Then the map $$
\lambda_{X}\left( x\right) \longmapsto \lambda_{Y}\left( f_{sp}\left( x\right) \right)
$$ defines a morphism $$f:X\rightarrow Y$$ for any $x\in X_{sp}.$
\end{itemize}
In particular, we have $$f\circ \lambda_{X}=\lambda_{Y}\circ f_{sp}.$$
\end{lemma}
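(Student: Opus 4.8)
The plan is to mimic exactly the argument used for \emph{Lemma 4.3} (Monodromy action, I), replacing the universal cover $X_{\Omega_{et}}$ by the $sp$-completion $X_{sp}$ and the étale fundamental group $\pi_1^{et}$ by the group $G_X=Aut(X_{sp}/X)$. The essential ingredients are already in place: by \emph{Theorem 5.9} the morphism $\lambda_X:X_{sp}\to X$ is affine, $X_{sp}$ is quasi-galois closed over $X$ by $\lambda_X$, and $k(X_{sp})=k(X)^{al}$ is (canonically) Galois over $k(X)$; hence by \emph{Lemma 3.2} there is a canonical isomorphism $G_X=Aut(X_{sp}/X)\cong Gal(k(X)^{al}/k(X))$, and likewise $G_Y\cong Gal(k(Y)^{al}/k(Y))$. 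The uniqueness up to isomorphism of $sp$-completions (\emph{Lemma 5.10}) guarantees that these objects are well defined independently of the chosen model.

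First I would construct $\tau$ on the level $Hom(X,Y)\to Hom(X_{sp},Y_{sp})$. Given $f\in Hom(X,Y)$, I choose (by \emph{Lemma 5.10}, after a tuning via \emph{Lemma 3.1}) models realized so that $k(Y)\subseteq k(X)^{al}$, so that $Y_{sp}$ sits compatibly inside the algebraic closure. Pick a base point $x_0\in X$ together with a lift; then every point of $X_{sp}$ over $x_0$ is of the form $g(x_0)$ for a unique $g\in G_X$ (this is where quasi-galois closedness of $X_{sp}/X$ and the identification $G_X\cong Gal$ are used — the fiber is a principal homogeneous space under $G_X$). Fixing a chosen lift $h(x_0)\in Y_{sp}$ of $f(x_0)$, the assignment $g(x_0)\mapsto \sigma(g)(h(x_0))$ is forced on the fiber over $x_0$, and then extends to a morphism $f_{sp}:X_{sp}\to Y_{sp}$ of schemes using that $\lambda_X,\lambda_Y$ are affine: on each affine chart $Spec(A_V)$ of $X_{sp}$ lying over an affine chart $Spec(B_V)$ of $X$, the ring map $B_V\to B'_W$ induced by $f$ extends uniquely to $A_V\to A'_W$ compatibly with the Galois actions, because $A_V=B_V[\Delta_V]$ is generated by conjugates and $\sigma$ dictates where those conjugates go. One checks $f\circ p$-type compatibility, i.e. $\lambda_Y\circ f_{sp}=f\circ\lambda_X$, directly from the definition of the two maps on points.

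Conversely, given $f_{sp}\in Hom(X_{sp},Y_{sp})$, the rule $\lambda_X(x)\mapsto\lambda_Y(f_{sp}(x))$ is well defined on points precisely because $\lambda_X$ realizes $X$ as the quotient $X_{sp}/G_X$ in the sense of \emph{Lemma 3.3} ($\mathcal{O}_X\cong(\lambda_X)_*(\mathcal{O}_{X_{sp}})^{G_X}$), and $f_{sp}$ is $\sigma$-equivariant, so it descends; the affineness of $\lambda_X,\lambda_Y$ again upgrades this set map to a morphism of schemes $f:X\to Y$. That the two constructions $f\mapsto f_{sp}$ and $f_{sp}\mapsto f$ are mutually inverse follows from $\lambda_Y\circ f_{sp}=f\circ\lambda_X$ together with faithfulness of the $G_X$-action on the generic fiber: two morphisms $X_{sp}\to Y_{sp}$ agreeing with the same $f$ after composition with $\lambda_Y$ and respecting the same base-point lift must coincide.

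The main obstacle I anticipate is the well-definedness and independence-of-choices in the forward direction: one must check that $g(x_0)\mapsto\sigma(g)(h(x_0))$ genuinely glues to a single morphism of schemes (not merely a map of sets on closed points), and that the resulting $f_{sp}$ does not depend on the chosen base point $x_0$ nor on the chosen lift $h(x_0)$ — a different choice changes $h(x_0)$ by an element of $G_Y$, and one needs $\sigma$-equivariance to absorb that ambiguity. This is exactly the subtlety glossed in the one-line proof "It is immediate from \emph{Lemmas 3.2-3, 4.2}" for \emph{Lemma 4.3}; here the analogue is that it follows from \emph{Lemmas 3.2, 3.3} and \emph{Theorem 5.9}. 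Everything else — affineness giving extension of ring maps, the fiber being a $G_X$-torsor, the quotient description yielding descent — is routine once those structural facts are invoked.
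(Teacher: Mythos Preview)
Your proposal is correct and takes essentially the same approach as the paper: the paper's proof is the single line ``It is immediate from \emph{Theorem 5.9} and \emph{Lemma 3.3},'' and you arrive at exactly this conclusion, explicitly naming \emph{Theorem 5.9} (quasi-galois closedness and affineness of $\lambda_X$) and \emph{Lemma 3.3} (the quotient description $\mathcal{O}_X\cong(\lambda_X)_*(\mathcal{O}_{X_{sp}})^{G_X}$) as the structural inputs, with \emph{Lemma 3.2} added only to make the Galois identification explicit. Your expanded discussion of base-point choices and $\sigma$-equivariance simply unpacks what the paper leaves implicit in ``immediate.''
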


\begin{proof}
It is immediate from \emph{Theorem 5.9} and \emph{Lemma 3.3}.
\end{proof}

\begin{lemma}
\emph{\textbf{($sp-$completion of rational maps, I)}} Let $k(X)=k(Y)$. Then there is a bijection $\tau$ from $Hom(X,Y)$ onto $Hom(X_{sp},Y_{sp})$ given in a canonical manner.

 In particular, $Hom(X,Y)$ must be a non-void set.
\end{lemma}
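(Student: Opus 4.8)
The plan is to derive Lemma 6.3 as a direct corollary of the monodromy action (Lemma 6.2), once we observe that the hypothesis $k(X)=k(Y)$ gives us enough structure to build a group homomorphism $\sigma\colon G_X\to G_Y$ canonically. First I would invoke Lemma 5.10: since $k(X)=k(Y)$, the $sp$-completions $X_{sp}$ and $Y_{sp}$ are essentially equal, and in particular there is an isomorphism $\theta\colon X_{sp}\to Y_{sp}$ of schemes. I would check that this $\theta$ is compatible with the structure morphisms $\lambda_X$ and $\lambda_Y$ in the sense relevant here — both $X_{sp}$ and $Y_{sp}$ have function field $k(X)^{al}=k(Y)^{al}$, and $X$, $Y$ are the common base with the common function field $k(X)=k(Y)$. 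Conjugating by $\theta$ then yields a group isomorphism $\sigma = \theta(\,\cdot\,)\theta^{-1}\colon \mathrm{Aut}(X_{sp}/X)\to \mathrm{Aut}(Y_{sp}/Y)$, i.e. $\sigma\colon G_X\to G_Y$.

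**Next**, with this $\sigma$ in hand, Lemma 6.2 applies verbatim and produces the desired canonical bijection $\tau\colon \mathrm{Hom}(X,Y)\to \mathrm{Hom}(X_{sp},Y_{sp})$, $f\mapsto f_{sp}$, together with the compatibility $f\circ\lambda_X=\lambda_Y\circ f_{sp}$. For the "in particular" clause, I would note that $\mathrm{Hom}(X_{sp},Y_{sp})$ is non-void: the isomorphism $\theta\colon X_{sp}\to Y_{sp}$ from Lemma 5.10 is itself an element of that set. Since $\tau$ is a bijection, $\mathrm{Hom}(X,Y)$ is therefore non-void as well. (One may also exhibit the relevant morphism $X\to Y$ directly: the identity on the common underlying ideas of $k(X)=k(Y)$ descends via the quotient description $\mathcal{O}_Y\cong (\lambda_Y)_*(\mathcal{O}_{Y_{sp}})^{G_Y}$ from Lemma 3.3, but routing it through $\tau$ is cleaner.)

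**The main obstacle** I anticipate is making precise that the isomorphism $\theta$ supplied by Lemma 5.10 is compatible enough with $\lambda_X,\lambda_Y$ for the conjugation $\sigma$ to land in $\mathrm{Aut}(Y_{sp}/Y)$ rather than merely $\mathrm{Aut}(Y_{sp})$. The point is that "essentially equal" (Definition from \S2) pins down the underlying space and the affine-open structure of $X_{sp}$ and $Y_{sp}$, and the construction in Theorem 5.9 builds $X_{sp}$ and $Y_{sp}$ out of the same data $\Delta$, $G=\mathrm{Gal}(k(X)^{al}/k(X))$, and the (essentially unique, by Lemma 3.1) reduced affine covering of the common $X$-data; so $\theta$ respects the projections to $X=Y$ up to the essential-equality identifications. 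I would spell this out by tracking a local chart $(U_V,\varphi_V,A_V)$ through both constructions and observing that $\lambda_Y\circ\theta$ and $\lambda_X$ agree on each such chart, hence globally. Once that compatibility is established, everything else is a formal consequence of Lemmas 6.2 and 5.10.
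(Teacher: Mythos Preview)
Your proposal is correct, and the overall architecture matches the paper's: construct a group homomorphism $\sigma\colon G_X\to G_Y$ and then invoke the monodromy-action lemma (Lemma~6.1 in the paper's numbering; you call it Lemma~6.2).

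The difference lies in how $\sigma$ is produced. The paper does it in one line via the Galois-group identification coming from Theorem~5.9 (together with Lemma~3.2): since $k(X)=k(Y)$, one has
\[
Aut(X_{sp}/X)\;\cong\;Gal\bigl(k(X)^{al}/k(X)\bigr)\;\cong\;Gal\bigl(k(Y)^{al}/k(Y)\bigr)\;\cong\;Aut(Y_{sp}/Y),
\]
and that chain \emph{is} $\sigma$. You instead go through Lemma~5.10 to obtain a scheme isomorphism $\theta\colon X_{sp}\to Y_{sp}$ and then define $\sigma$ by conjugation with $\theta$. Your route works, but it creates exactly the ``main obstacle'' you anticipate: verifying that $\theta$ intertwines $\lambda_X$ and $\lambda_Y$ well enough for conjugation to land in $Aut(Y_{sp}/Y)$ rather than $Aut(Y_{sp})$. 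The paper's purely field-theoretic route sidesteps this check entirely, since the compatibility with the quasi-galois-closed structure is already built into Lemma~3.2 and Theorem~5.9. Your argument for the non-voidness clause (exhibit $\theta\in Hom(X_{sp},Y_{sp})$ and pull back through the bijection $\tau$) is a nice addition; the paper leaves that implicit.
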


\begin{proof}
By \emph{Theorem 5.9} we have $$Aut(X_{sp}/X)\cong Gal(k(X)^{al}/k(X)) \cong Aut(Y_{sp}/Y).$$ It follows that there is a group isomorphism $\sigma: G_{X}\cong G_{Y}$. Then it is immediate from \emph{Lemma 6.1}.
\end{proof}

\begin{lemma}
\emph{\textbf{($sp-$completion of rational maps, II)}} Suppose $k(X) \supsetneqq k(Y)$. Then there is a bijection $\tau$ from $Hom(X,Y)$ onto $Hom(X_{sp},Y_{sp})$ given in a canonical manner.

In particular, $Hom(X,Y)$ must be a non-void set and there is a homomorphism $\sigma: G_{X}\to G_{Y}$.
\end{lemma}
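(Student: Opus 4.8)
The plan is to reduce everything to the monodromy machinery of \emph{Lemma 6.1}, so the real content is to produce a group homomorphism $\sigma\colon G_{X}\to G_{Y}$ and to verify that $Hom(X,Y)$ is non-void. First I would observe that, since $k(Y)\subsetneqq k(X)$, an algebraic closure $\Omega$ of $k(X)$ is simultaneously an algebraic closure of $k(Y)$; hence by \emph{Lemma 5.10} we may take $X_{sp}$ and $Y_{sp}$ to be built inside the same $\Omega$, with $k(X_{sp})=k(Y_{sp})=\Omega$. Then by \emph{Theorem 5.9} we have canonical isomorphisms
\begin{equation*}
G_{X}=Aut(X_{sp}/X)\cong Gal(\Omega/k(X)),\qquad G_{Y}=Aut(Y_{sp}/Y)\cong Gal(\Omega/k(Y)),
\end{equation*}
and the inclusion $k(Y)\hookrightarrow k(X)$ induces the natural restriction-type homomorphism $Gal(\Omega/k(X))\hookrightarrow Gal(\Omega/k(Y))$ (an automorphism of $\Omega$ fixing $k(X)$ a fortiori fixes $k(Y)$). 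Transporting along the two isomorphisms above gives the desired $\sigma\colon G_{X}\to G_{Y}$.

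Next I would feed this $\sigma$ into \emph{Lemma 6.1}. That lemma already asserts a canonical bijection $\tau\colon Hom(X,Y)\to Hom(X_{sp},Y_{sp})$, $f\mapsto f_{sp}$, once a homomorphism $G_{X}\to G_{Y}$ is fixed, together with the compatibility $f\circ\lambda_{X}=\lambda_{Y}\circ f_{sp}$. So the bijection claimed in \emph{Lemma 6.3} is immediate from \emph{Lemma 6.1} applied to the specific $\sigma$ just constructed; I would only need to remark that the construction is canonical, i.e.\ independent of the auxiliary choices made in \emph{Theorem 5.9}, which follows from the uniqueness-up-to-essential-equality statement of \emph{Lemma 5.10}. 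For the ``in particular'' clause, it suffices to exhibit one element of $Hom(X_{sp},Y_{sp})$; since $k(Y_{sp})=\Omega=k(X_{sp})$, \emph{Lemma 6.2} (the $k(X)=k(Y)$ case applied to the pair $X_{sp},Y_{sp}$, whose function fields agree) guarantees $Hom(X_{sp},Y_{sp})$ is non-void, whence $Hom(X,Y)$ is non-void by the bijection $\tau$.

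The step I expect to be the main obstacle is the construction of $\sigma$ itself, specifically checking that the identification $G_{X}\cong Gal(\Omega/k(X))$ of \emph{Theorem 5.9} is compatible with the one for $G_{Y}$ in a way that makes the square
\begin{equation*}
\begin{array}{ccc}
Aut(X_{sp}/X) & \longrightarrow & Aut(Y_{sp}/Y)\\
\downarrow\wr & & \downarrow\wr\\
Gal(\Omega/k(X)) & \hookrightarrow & Gal(\Omega/k(Y))
\end{array}
\end{equation*}
commute. This requires that the same copy of $\Omega$ and the same reduced affine coverings with values in $\Omega$ be used on both sides; here \emph{Lemma 3.1} (tuning) and \emph{Lemma 5.10} do the bookkeeping, since they let us normalize $X_{sp}$ and $Y_{sp}$ inside one fixed $\Omega$. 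Once the square commutes, the top arrow is the sought $\sigma$ and the rest is a direct appeal to \emph{Lemma 6.1}; the remaining verifications (that $f_{sp}$ so obtained is genuinely a scheme morphism, not merely a map of spaces, and that $\tau$ is bijective) are already subsumed in \emph{Lemma 6.1} and need not be redone.
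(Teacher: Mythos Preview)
Your reduction to \emph{Lemma 6.1} is the right idea and matches the paper, but the construction of $\sigma$ has a genuine gap. You claim that ``since $k(Y)\subsetneqq k(X)$, an algebraic closure $\Omega$ of $k(X)$ is simultaneously an algebraic closure of $k(Y)$.'' This is false unless $k(X)$ is \emph{algebraic} over $k(Y)$: if, say, $k(Y)=\mathbb{Q}$ and $k(X)=\mathbb{Q}(t)$, then $k(X)^{al}=\overline{\mathbb{Q}(t)}$ is certainly not an algebraic closure of $\mathbb{Q}$. The lemma is stated for arbitrary integral varieties with $k(X)\supsetneqq k(Y)$, and in the applications (e.g.\ \emph{Lemma 8.2}, \emph{Theorem 1.2}) one explicitly wants the case where $X$ has larger dimension than $Y$, so the transcendental situation is the essential one. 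Consequently your identification $k(X_{sp})=k(Y_{sp})=\Omega$ collapses, your invocation of \emph{Lemma 5.10} to normalize both completions inside one $\Omega$ is unjustified, and your non-voidness argument via \emph{Lemma 6.2} applied to $X_{sp},Y_{sp}$ (which needs $k(X_{sp})=k(Y_{sp})$) breaks for the same reason.

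The paper's proof deals with exactly this point: it keeps $k(X)^{al}$ and $k(Y)^{al}$ distinct, notes that $k(X)^{al}\supseteq k(Y)^{al}$, and builds $\sigma$ as the composite
\begin{equation*}
Gal\bigl(k(X)^{al}/k(X)\bigr)\hookrightarrow Gal\bigl(k(X)^{al}/k(Y)\bigr)\twoheadrightarrow \frac{Gal\bigl(k(X)^{al}/k(Y)\bigr)}{Gal\bigl(k(X)^{al}/k(Y)^{al}\bigr)}\cong Gal\bigl(k(Y)^{al}/k(Y)\bigr),
\end{equation*}
i.e.\ inclusion followed by restriction to $k(Y)^{al}$. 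Your inclusion map is only the first half of this; the missing quotient step is precisely what collapses the transcendental part and lands in the correct target group $G_{Y}\cong Gal(k(Y)^{al}/k(Y))$. Once $\sigma$ is built this way, the appeal to \emph{Lemma 6.1} is exactly as you describe, and the paper simply says the rest is immediate from that lemma.
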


\begin{proof}
By \emph{Theorem 5.9} we have $$Aut(X_{sp}/X)\cong Gal(k(X)^{al}/k(X));$$ $$Aut(Y_{sp}/Y)\cong Gal(k(Y)^{al}/k(Y)).$$
As $k(X) \supsetneqq k(Y)$, we have $k(X)^{al} \supseteq k(Y)^{al}$. In particular, $k(X)^{al}$ is a Galois extension over $k(Y)^{al}$. There is a homomorphism from ${Gal(k(X)^{al}/k(X))} $ onto $Gal(k(Y)^{al}/k(Y))$, which  is the composite of the maps $$Gal(k(X)^{al}/k(X)) \to {Gal(k(X)^{al}/k(Y))}$$
and
$$\frac {Gal(k(X)^{al}/k(Y))} {Gal(k(X)^{al}/k(Y)^{al})}\cong Gal(k(Y)^{al}/k(Y)).$$

It follows that there is a homomorphism $\sigma: G_{X}\to G_{Y}$. Then it is immediate from \emph{Lemma 6.1}.
\end{proof}

\begin{remark}
The $sp-$completions of rational maps between integral schemes, as stated above, can be regarded as a generalization of the correspondences between dominant rational maps of algebraic varieties and homomorphisms of algebras in the classical algebraic geometry.
\end{remark}

\section{\emph{qc} Fundamental Groups}

To prove the main theorems of the paper, we also need some results on the \emph{qc} fundamental group of an arithmetic scheme. See \cite{An5} for details.

\subsection{Definition for \emph{qc} fundamental groups}

Let $X$ be an arithmetic variety. Fixed an algebraically closed field $
\Omega $ that contains  $k\left( X\right) $. Here, $\Omega $ is not necessarily algebraic over $k\left(
X\right) .$

Define $X_{qc}\left[ \Omega \right] $ to be the set of arithmetic varieties $Z$ satisfying the two conditions:
\begin{itemize}
\item $Z$ has a reduced
affine covering with values in $\Omega $;

\item There is a
surjective morphism $f:Z\rightarrow X$ of finite type such that $Z$ is
quasi-galois closed over $X.$
\end{itemize}

Naturally there is a partial order $\leq$ in the set $X_{qc}\left[ \Omega \right] $ given in such
a manner:
\begin{itemize}
\item For any $Z_{1},Z_{2}\in X_{qc}\left[ \Omega \right] ,$ we say
\begin{equation*}
Z_{1}\leq Z_{2}
\end{equation*}
if there is a surjective morphism $\varphi :Z_{2}\rightarrow Z_{1}$ of
finite type such that $Z_{2}$ is quasi-galois closed over $Z_{1}.$
\end{itemize}

By \emph{Lemmas 3.6,3.8-10} in \cite{An5}, it is seen that $X_{qc}\left[ \Omega
\right] $ is a directed set and
\begin{equation*}
\{Aut\left( Z/X\right) :Z\in X_{qc}\left[ \Omega \right] \}
\end{equation*}
is an inverse system of groups.

The
inverse limit
\begin{equation*}
\pi _{1}^{qc}\left( X;\Omega \right) \triangleq {\lim_{\longleftarrow}}
_{Z\in X_{qc}\left[ \Omega \right] }{Aut\left( Z/X\right)}
\end{equation*}
of the inverse system $\{Aut\left( Z/X\right) :Z\in X_{qc}\left[ \Omega
\right] \}$ of groups is said to be the \textbf{\emph{qc} fundamental group} of the
scheme $X$ with coefficient in $\Omega .$

\subsection{Main result for \emph{qc} fundamental groups}

There are the following result for \emph{qc} fundamental groups.

\begin{lemma}
\emph{(\cite{An5})}
Let $X$ be an arithmetic variety. Suppose that  $\Omega$ is an algebraically closed field containing  $k\left( X\right) $.
There are the
following statements.
\begin{itemize}
\item There is a group isomorphism
\begin{equation*}
\pi _{1}^{qc}\left( X;\Omega \right) \cong Gal\left( {\Omega }/k\left(
X\right) \right) .
\end{equation*}

\item Take any geometric point $s$ of $X$ over $\Omega $. Then
there is a group isomorphism
\begin{equation*}
\pi _{1}^{et}\left( X;s\right) \cong \pi _{1}^{qc}\left( X;\Omega \right)
_{et}
\end{equation*}
where $\pi _{1}^{qc}\left( X;\Omega \right) _{et}$ is a subgroup of $\pi
_{1}^{qc}\left( X;\Omega \right) $. In particular, $\pi _{1}^{qc}\left( X;\Omega
\right) _{et}$ is a normal subgroup of $\pi _{1}^{qc}\left( X;\Omega \right)
$.
\end{itemize}
\end{lemma}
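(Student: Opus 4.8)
The plan is to prove the two statements separately, reducing everything to the Geometric model lemma (\emph{Lemma 3.4}) together with infinite Galois theory. For the first statement, the key observation is that for each $Z\in X_{qc}[\Omega]$ the structure morphism $Z\to X$ is a finite-type surjection with $Z$ quasi-galois closed over $X$, so \emph{Lemma 3.4} applies and yields that $k(Z)$ is canonically Galois over $k(X)$ together with an isomorphism $\mathrm{Aut}(Z/X)\cong\mathrm{Gal}(k(Z)/k(X))$. I would first check that these isomorphisms are compatible with the transition maps: for $Z_{1}\leq Z_{2}$ in $X_{qc}[\Omega]$ the morphism $Z_{2}\to Z_{1}$ induces, on automorphism groups, precisely the restriction homomorphism $\mathrm{Gal}(k(Z_{2})/k(X))\to\mathrm{Gal}(k(Z_{1})/k(X))$, since both sides are determined by their effect on the nested subfields $k(Z_{1})\subseteq k(Z_{2})\subseteq\Omega$ and $k(Z_{1})/k(X)$ is normal. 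Hence the inverse system $\{\mathrm{Aut}(Z/X)\}$ is isomorphic to the inverse system $\{\mathrm{Gal}(k(Z)/k(X))\}$ indexed by the directed family $\{k(Z):Z\in X_{qc}[\Omega]\}$ of Galois subextensions of $\Omega/k(X)$.

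The remaining point for the first statement is cofinality: every finitely generated subextension $M$ of $\Omega/k(X)$ should be contained in some $k(Z)$. I would take a reduced affine model of $M$ with values in $\Omega$ via \emph{Lemma 3.1}, then pass to its quasi-galois closure over $X$ via the maximal affine patching construction of \emph{Lemma 3.5} (and \cite{An7}), producing a member of $X_{qc}[\Omega]$ whose function field contains $M$. Since then $\bigcup_{Z}k(Z)=\Omega$, infinite Galois theory gives $\varprojlim_{Z}\mathrm{Gal}(k(Z)/k(X))\cong\mathrm{Gal}(\Omega/k(X))$, and therefore $\pi_{1}^{qc}(X;\Omega)\cong\mathrm{Gal}(\Omega/k(X))$.

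For the second statement I would single out the subfamily $X_{qc}[\Omega]_{et}\subseteq X_{qc}[\Omega]$ consisting of those $Z$ for which $Z\to X$ is in addition finite \'{e}tale Galois; this is again directed, and its function fields run through the finite unramified Galois subextensions of $\Omega/k(X)$, whose union is $k(X)^{un}$. Restricting the inverse system to this subfamily and applying \emph{Lemma 4.1}, together with the compatibility of the fixed geometric point $s$ with the chosen embedding $k(X)^{un}\hookrightarrow\Omega$, identifies the resulting inverse limit with $\pi_{1}^{et}(X;s)\cong\mathrm{Gal}(k(X)^{un}/k(X))$. I would then define $\pi_{1}^{qc}(X;\Omega)_{et}$ to be the subgroup of $\pi_{1}^{qc}(X;\Omega)\cong\mathrm{Gal}(\Omega/k(X))$ cut out by this \'{e}tale subsystem; its normality is a formal consequence of the fact, already used in the proof of \emph{Lemma 3.6}, that $k(X)^{un}$ is Galois over $k(X)$, so that the corresponding subgroup of $\mathrm{Gal}(\Omega/k(X))$ is stable under conjugation.

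The main obstacle I anticipate is the cofinality step in the first part: one must ensure that the quasi-galois closure of an arbitrary affine model with values in $\Omega$ again belongs to $X_{qc}[\Omega]$ — finite type over $X$, with a reduced affine covering with values in $\Omega$, and quasi-galois closed — so that the directed family of function fields really sweeps out all of $\Omega$ rather than merely a subfield; this is where the finiteness and affineness consequences of \emph{Lemma 3.4} and the construction of \cite{An7} do the real work. A secondary point requiring care is pinning down the identification in the second part so that $\pi_{1}^{qc}(X;\Omega)_{et}$ is genuinely realized as a subgroup compatibly with $s$, rather than only as the canonical quotient $\pi_{1}^{qc}(X;\Omega)\twoheadrightarrow\pi_{1}^{et}(X;s)$ dual to the inclusion of index systems.
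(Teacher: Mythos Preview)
The paper does not give its own proof of this lemma: it is quoted from \cite{An5} with no argument beyond the citation, so there is nothing in the present paper to compare your proposal against directly.

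Your treatment of the first statement is the natural one and almost certainly matches what \cite{An5} does: apply \emph{Lemma 3.4} termwise to identify each $\mathrm{Aut}(Z/X)$ with $\mathrm{Gal}(k(Z)/k(X))$, verify compatibility of the transition maps with Galois restriction, establish cofinality of the family $\{k(Z)\}$ inside $\Omega/k(X)$, and pass to the inverse limit via infinite Galois theory. The cofinality step you flag is indeed the only substantive point, and your outline for it (model plus quasi-galois closure) is the expected route.

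For the second statement, however, you have correctly identified a genuine gap and then not closed it. Restricting the inverse system to the \'{e}tale subfamily produces the projection $\pi_{1}^{qc}(X;\Omega)\twoheadrightarrow\pi_{1}^{et}(X;s)$ dual to $k(X)^{un}\hookrightarrow\Omega$; it does not exhibit $\pi_{1}^{et}(X;s)$ as a subgroup. Your phrase ``the subgroup cut out by this \'{e}tale subsystem'' conflates two different objects: the normal subgroup that Galois theory hands you from the fact that $k(X)^{un}/k(X)$ is Galois is $\mathrm{Gal}(\Omega/k(X)^{un})$, the \emph{kernel} of that projection, which is not isomorphic to $\pi_{1}^{et}(X;s)$ in general. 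To realise $\pi_{1}^{et}(X;s)$ literally as a subgroup of $\pi_{1}^{qc}(X;\Omega)$ one needs either a splitting of the short exact sequence $1\to\mathrm{Gal}(\Omega/k(X)^{un})\to\mathrm{Gal}(\Omega/k(X))\to\mathrm{Gal}(k(X)^{un}/k(X))\to 1$, which is neither canonical nor automatic, or a specific definition of $\pi_{1}^{qc}(X;\Omega)_{et}$ from \cite{An5} that you have not reproduced. As it stands, your argument proves the quotient statement and the normality of the kernel, but not the subgroup claim in the lemma.
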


\section{Proofs of the Main Theorems}

\subsection{Preparatory lemmas}

Let's first prove the below result on the surjection of the sets that are considered.

Let $X$ and $Y$ be arithmetic varieties.

\begin{lemma}
\emph{\textbf{($sp-$completion of rational maps)}} Assume $k(X)\supseteq k(Y)$. Then
$Hom(X,Y)$ must be a non-void set.
\end{lemma}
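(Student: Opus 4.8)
The plan is to deduce this from the $sp$-completion machinery of \S 5--6, reducing the non-emptiness of $Hom(X,Y)$ to the construction of a single morphism between $sp$-completions, which exists because both target and source have the \emph{same} algebraically closed function field $k(X)^{al}$. First I would split into the two cases already isolated in \emph{Lemmas 6.3 and 6.4}: either $k(X)=k(Y)$, in which case \emph{Lemma 6.3} directly gives a bijection $\tau: Hom(X,Y)\to Hom(X_{sp},Y_{sp})$ and in particular $Hom(X,Y)\neq\emptyset$; or $k(X)\supsetneqq k(Y)$, which is the content of \emph{Lemma 6.4}, again yielding $Hom(X,Y)\neq\emptyset$. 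So at the level of bookkeeping the statement is an immediate corollary of the previous two lemmas, and the proof can be written in two lines citing them.

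However, since \emph{Lemmas 6.3 and 6.4} themselves rest on \emph{Lemma 6.1} (the monodromy action), it is worth recording the mechanism explicitly. The key point is that by \emph{Theorem 5.9} we have $k(X_{sp})=k(X)^{al}$ and $k(Y_{sp})=k(Y)^{al}$, with $Aut(X_{sp}/X)\cong Gal(k(X)^{al}/k(X))$ and $Aut(Y_{sp}/Y)\cong Gal(k(Y)^{al}/k(Y))$. When $k(X)\supseteq k(Y)$ we get $k(X)^{al}\supseteq k(Y)^{al}$, and the restriction-then-quotient composite exhibited in the proof of \emph{Lemma 6.4} produces a group homomorphism $\sigma: G_X=Aut(X_{sp}/X)\to G_Y=Aut(Y_{sp}/Y)$. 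Feeding $\sigma$ into \emph{Lemma 6.1} produces the bijection $\tau: Hom(X,Y)\to Hom(X_{sp},Y_{sp})$; and $Hom(X_{sp},Y_{sp})$ is non-void because $k(X_{sp})=k(X)^{al}\supseteq k(Y)^{al}=k(Y_{sp})$, so by \emph{Lemma 6.2} (or again \emph{Lemma 6.1} applied to $X_{sp},Y_{sp}$) there is at least the canonical morphism $X_{sp}\to Y_{sp}$ induced by the field inclusion $k(Y)^{al}\hookrightarrow k(X)^{al}$. Transporting back along $\tau^{-1}$ gives an element of $Hom(X,Y)$.

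The only genuine subtlety — and the step I expect to be the real obstacle — is confirming that the canonical morphism $X_{sp}\to Y_{sp}$ actually exists as a scheme morphism and not merely a map of function fields: one must check that the inclusion $k(Y)^{al}\subseteq k(X)^{al}$ is compatible with the reduced affine coverings with values in $\Omega$ used to build $X_{sp}$ and $Y_{sp}$, i.e. that each affine chart $Spec(A_V)$ of $X_{sp}$ maps to an affine chart of $Y_{sp}$ via a ring map $B_W\to A_V$. This is where the $sp$-completeness of $X_{sp}$ (\emph{Definition 5.7}, \emph{Theorem 5.9}) does the work: since $\Gamma(Y_{sp})$ embeds as a subgraph of $\Gamma(X_{sp})$ and $k(X_{sp})$ is algebraic over $k(Y_{sp})$, essential equality on the overlap forces the chart-by-chart compatibility, exactly as in the contradiction argument at the end of the proof of \emph{Theorem 5.9}. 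Once that is granted, the statement follows; I would therefore write the proof as: reduce to the two cases, invoke \emph{Lemmas 6.3 and 6.4}, and remark that in both cases the bijection $\tau$ carries the canonical morphism of $sp$-completions back to a morphism $X\to Y$, so $Hom(X,Y)\neq\emptyset$.
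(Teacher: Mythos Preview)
Your overall strategy matches the paper's: produce a morphism $f_{sp}:X_{sp}\to Y_{sp}$ at the level of $sp$-completions and then descend along $\lambda_X,\lambda_Y$ to obtain an element of $Hom(X,Y)$. Two remarks, however.

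First, a bookkeeping point: the results you call ``\emph{Lemmas 6.3 and 6.4}'' are \emph{Lemmas 6.2 and 6.3} in the paper (6.4 is a Remark). More importantly, the clauses ``In particular, $Hom(X,Y)$ must be a non-void set'' in those lemmas are bare assertions whose proofs only establish the bijection $\tau$ via \emph{Lemma 6.1}; a bijection between two sets does not by itself make either non-empty. So citing them for non-emptiness is circular --- the present lemma is precisely where that claim is being substantiated. You seem to sense this, since you then try to argue the non-emptiness of $Hom(X_{sp},Y_{sp})$ independently.

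Second, and this is the substantive gap, your argument for the existence of $X_{sp}\to Y_{sp}$ via $sp$-completeness and graph embedding does not go through as written. You assert that $\Gamma(Y_{sp})$ embeds as a subgraph of $\Gamma(X_{sp})$, but no such embedding has been produced; and even granting it, \emph{Definition 5.7} only applies when the ambient function field lies in an \emph{algebraic} closure of the smaller one, which fails whenever $k(X)$ is transcendental over $k(Y)$ (e.g.\ $\dim X>\dim Y$). The paper avoids this entirely by building $f_{sp}$ directly chart-by-chart: since $\overline{k(Y)}\subseteq\overline{k(X)}$, every ring $B$ of an affine chart of $Y_{sp}$ embeds into some ring $A$ of an affine chart of $X_{sp}$, and conversely each such $A$ contains some such $B$; the inclusions $B\hookrightarrow A$ give local morphisms $Spec(A)\to Spec(B)$ which glue to $f_{sp}:X_{sp}\to Y_{sp}$, and one then descends via $\lambda_X,\lambda_Y$. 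This is more elementary than your route and does not touch the graph functor at all.
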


\begin{proof}
Using the \emph{sp}-completions $(X_{sp},\lambda_{X})$ and $(Y_{sp},\lambda_{Y})$ of $X$ and $Y$, respectively.  Then $k(X_{sp})$ (resp. $k(Y_{sp})$)is the algebraic closure of $k(X)$ (resp. $k(Y)$).

As $k(X)\supseteq k(Y)$, we have $\overline{k(X)}\supseteq \overline{k(Y)}$. Then it is seen that the ring $B$ of an affine open set $V$ in $Y_{sp}$ must be embedded into the ring $A $ of some certain affine open set $U$ in $X_{sp}$; conversely, each $A$ must contain some $B$. It follows that there is a homomorphism $$f_{U}:U=Spec(A)\to V=Spec(B)$$ defined by the inclusion. This gives us a scheme homomorphism $$f_{sp}:X_{sp}\to Y_{sp}.$$

By the projections $\lambda_{X}:X_{sp}\to X$ and $\lambda_{Y}:Y_{sp}\to Y$ we have a unique homomorphism $f:X \to Y$ satisfying the condition $$\lambda_{sp}\circ f_{sp}=f \circ \lambda_{sp}.$$
This completes the proof.
\end{proof}

\begin{lemma}
 Suppose $k\left( Y\right) \subseteq k\left( X\right)$. Then  each element of the set $Hom(\pi_{1}^{et}(X),\pi_{1}^{et}(Y))$ and of the set $Hom(\pi_{1}^{et}(k(X)),\pi_{1}^{et}(k(Y))$ gives an element of the set $Hom (X,Y)$ in a canonical manner, respectively.
\end{lemma}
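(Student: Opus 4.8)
The plan is to connect the algebraic data (homomorphisms of fundamental groups) to geometric morphisms by passing through the two ``completions'' already built in the paper: the universal cover $X_{\Omega_{et}}$ of \emph{Lemma 3.6} and the $sp$-completion $X_{sp}$ of \emph{Theorem 5.9}. First I would dispose of the second set. Given an element of $Hom(\pi_{1}^{et}(k(X)),\pi_{1}^{et}(k(Y)))$, recall that by definition $\pi_{1}^{et}(k(X))=\pi_{1}^{et}(Spec(k(X)))$ and similarly for $Y$, and that via \emph{Lemma 4.1} these groups are canonically $Gal(k(X)^{un}/k(X))$ and $Gal(k(Y)^{un}/k(Y))$. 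Since $k(Y)\subseteq k(X)$, \emph{Lemma 8.1} already furnishes a canonical morphism $X\to Y$, so the essential content here is only that the assignment can be made to respect the given group homomorphism; I would observe that the $sp$-completion construction of \emph{Theorem 5.9} together with the monodromy bijection $\tau$ of \emph{Lemma 6.1} produces $f_{sp}:X_{sp}\to Y_{sp}$ compatible with the prescribed $\sigma:G_{X}\to G_{Y}$ (which exists by \emph{Lemmas 6.2--6.3}, using $k(X)\supseteq k(Y)$), and then $f=\lambda_{Y}\circ f_{sp}$ descends via $f\circ\lambda_{X}=\lambda_{Y}\circ f_{sp}$.

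For the first set, $Hom(\pi_{1}^{et}(X),\pi_{1}^{et}(Y))$, I would argue in parallel but through the \'etale universal covers. By \emph{Lemma 4.2} a group homomorphism $\sigma:\pi_{1}^{et}(X)\to\pi_{1}^{et}(Y)$ is the same datum as a homomorphism $\sigma:Aut(X_{\Omega_{et}}/X)\to Aut(Y_{\Omega_{et}}/Y)$; note this requires $k(Y)^{un}\subseteq\Omega$ where $\Omega$ is the fixed algebraic closure of $k(X)$, which holds because $k(Y)\subseteq k(X)$ forces $k(Y)^{un}\subseteq k(X)^{al}=\Omega$. Then \emph{Lemma 4.3} (Monodromy action, I) gives a bijection $\tau:Hom(X,Y)\to Hom(X_{\Omega_{et}},Y_{\Omega_{et}})$, and what I need is that $Hom(X_{\Omega_{et}},Y_{\Omega_{et}})$ is non-empty and that $\tau^{-1}$ applied to a suitable element of it yields the desired morphism $X\to Y$. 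Non-emptiness of $Hom(X_{\Omega_{et}},Y_{\Omega_{et}})$ follows from \emph{Lemma 8.1} applied to the pair $(X_{\Omega_{et}},Y_{\Omega_{et}})$, since $k(Y_{\Omega_{et}})=k(Y)^{un}\subseteq k(X)^{un}=k(X_{\Omega_{et}})$ by \emph{Lemma 3.6}, and the inclusion of these fields together with the construction respects the given $\sigma$. Concretely, the map $g(x_{0})\mapsto\sigma(g)(h(x_{0}))$ from \emph{Lemma 4.3} defines $f_{et}$, and then $f:X\to Y$ is recovered by $p_{X}(x)\mapsto p_{Y}(f_{et}(x))$.

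The step I expect to be the main obstacle is checking \emph{canonicity}, i.e.\ that the assignment from group homomorphisms to scheme morphisms is well-defined independently of the auxiliary choices (the generating sets $\Delta$, the maximal affine coverings, the identifications in \emph{Lemmas 4.1--4.2}), and in particular that it is genuinely functorial in $\sigma$ rather than merely producing \emph{some} morphism. The subtlety is that \emph{Lemma 8.1} only asserts $Hom(X,Y)\neq\emptyset$ without tracking the group action, so I must instead route the argument entirely through the monodromy bijections $\tau$ of \emph{Lemmas 4.3} and \emph{6.1}, whose formulas $g(x_{0})\mapsto\sigma(g)(h(x_{0}))$ manifestly depend on $\sigma$, and then verify that distinct $\sigma$ with the same image on the relevant quotients yield compatible morphisms. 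I would handle this by showing that the composite $Hom(\pi_{1}^{et}(X),\pi_{1}^{et}(Y))\xrightarrow{\text{Lem 4.2}}Hom(Aut(X_{\Omega_{et}}/X),Aut(Y_{\Omega_{et}}/Y))\xrightarrow{\tau^{-1}}Hom(X,Y)$ is well-defined, using that the universal cover and its automorphism group are canonical by the \emph{Remark} after \emph{Definition 2.7} and by \emph{Lemma 3.6}; the analogous composite for $sp$-completions uses \emph{Lemma 5.10} (uniqueness up to isomorphism). The remaining verifications — that $f_{et}$ and $f_{sp}$ are morphisms of schemes, not just of underlying spaces — are exactly the content of \emph{Lemmas 4.3} and \emph{6.1} and may be cited.
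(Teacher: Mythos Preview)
Your proposal is broadly correct at the level of rigor the paper operates at, but it organizes the argument differently from the paper's own proof. The paper treats the first set $Hom(\pi_{1}^{et}(X),\pi_{1}^{et}(Y))$ entirely through the $sp$-completions: given $\delta$, it first produces (independently of $\delta$, from $k(X)\supseteq k(Y)$ alone via \emph{Lemmas 6.2--6.3}) a homomorphism $\sigma:G_{X}=Aut(X_{sp}/X)\to G_{Y}$, then invokes \emph{Lemma 7.1} to exhibit $\pi_{1}^{et}(X)$ as the quotient $Gal(k(X)^{al}/k(X))/Gal(k(X)^{al}/k(X)^{un})$ of $G_{X}$, asserts that $\delta$ and $\sigma$ are ``compatible'' via this quotient, and finally reads off the morphism $f:X\to Y$ from the monodromy bijection of \emph{Lemmas 6.2--6.3}. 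The \'etale universal covers and \emph{Lemma 4.3} are not invoked at all in the paper's proof of this lemma.

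Your route via \emph{Lemma 4.3} is more direct for the first set and avoids the somewhat opaque ``compatibility'' step, at the cost of needing to apply \emph{Lemma 8.1} to the pair $(X_{\Omega_{et}},Y_{\Omega_{et}})$, which are integral varieties but not arithmetic varieties in the paper's sense (they need not be of finite type); this is harmless since the proof of \emph{Lemma 8.1} via \emph{Theorem 5.9} works for integral varieties. One small slip: for the second set you identify $\pi_{1}^{et}(k(X))$ with $Gal(k(X)^{un}/k(X))$ via \emph{Lemma 4.1}, but $Spec(k(X))$ has dimension zero and is not an arithmetic variety; the intended identification (consistent with the paper's later use in the proofs of \emph{Theorems 1.3--1.4}) is with the absolute Galois group $Gal(k(X)^{al}/k(X))$, which then matches $G_{X}$ exactly and makes your $sp$-completion argument go through cleanly. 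Both approaches leave the canonicity question at roughly the same level of detail; you are right to flag it, and the paper's own phrase ``compatible in a canonical manner'' does not resolve it any more explicitly than your sketch does.
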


\begin{proof}
Let $\delta $ be a homomorphism from $\pi_{1}^{et}(X)$ into $ \pi_{1}^{et}(Y)$.

As $k(X) \supsetneqq k(Y)$, by \emph{Lemmas 6.2-3} it is seen that there is a group homomorphism $\sigma$ from $G_{X}=Aut(X_{sp}/X)$ into $G_{Y}=Aut(Y_{sp}/Y)$.

From \emph{Lemma 7.1} it is seen that $k(X)^{un}/k(X)$ and $k(Y)^{un}/k(Y)$ are both Galois extensions.

Then we have
$$\pi_{1}^{et}(X)\cong \frac {Gal(k(X)^{al}/k(X))} {Gal(k(X)^{al}/k(X)^{un})};$$
$$\pi_{1}^{et}(Y)\cong \frac {Gal(k(Y)^{al}/k(Y))} {Gal(k(Y)^{al}/k(Y)^{un})}.$$

It is easily seen that the homomorphisms $\delta$ and $\sigma$ are compatible in a canonical manner.
From \emph{Lemmas 6.2-3} it is seen that for the homomorphism $\delta$ there is a corresponding morphism $f:X \to Y$ which is given in a canonical manner.
\end{proof}

\begin{lemma}
Every morphism $f:X\to Y$ arise from a morphism $f_{qc}:X_{qc}\to Y_{qc}$ of integral schemes given in such a manner:
$$f\circ \phi_{X}=\phi_{Y}\circ f_{qc}.$$
In particular, for the function fields, we have $$k(X) \subseteq k(X_{qc}); k(Y) \subseteq k(Y_{qc}).$$

 Here, $X_{qc}$ is quasi-galois closed over $X$ by a surjective morphism $\phi_{X}$; $Y_{qc}$ is quasi-galois closed over $Y$ by a surjective morphism $\phi_{Y}$ \emph{(see} \S 3\emph{)}.
\end{lemma}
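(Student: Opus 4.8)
The plan is to lift a given morphism $f:X\to Y$ of arithmetic varieties to the quasi-galois closed covers constructed in \S 3, using the universal-cover construction applied to $f$ rather than to $X$ and $Y$ separately. First I would recall that, since $f$ is a surjective morphism of integral schemes, it induces an inclusion of function fields $k(Y)\hookrightarrow k(X)$; fix an algebraic closure $\Omega$ of $k(X)$, so that $k(Y)^{al}\subseteq\Omega$ as well. By \emph{Lemma 3.1} (tuning scheme) we may assume both $X$ and $Y$ carry maximal reduced affine coverings $\mathcal{C}_X$, $\mathcal{C}_Y$ with values in $\Omega$, and that $f$ is compatible with these in the sense that every local chart of $\mathcal{C}_X$ maps into a local chart of $\mathcal{C}_Y$ (replacing $\mathcal{C}_X$ by a refinement if necessary, which does not change $X$ up to essential equality).

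Next I would apply the construction preceding \emph{Lemma 3.6} to produce $(X_{qc},\phi_X)$ and $(Y_{qc},\phi_Y)$: choose generating sets $\Delta_X$ of $\Omega$ over $k(X)$ and $\Delta_Y$ of $\Omega$ over $k(Y)$, adjoin all Galois conjugates chart-by-chart, and glue. The key point is that because $k(Y)\subseteq k(X)$ and both covers are built inside the \emph{same} $\Omega$, the ring $B_V=\mathcal{O}_{Y_{qc}}(V)$ of any affine chart of $Y_{qc}$ embeds into the ring $A_U=\mathcal{O}_{X_{qc}}(U)$ of a suitable chart of $X_{qc}$ lying over it — this is the same embedding-of-affine-pieces argument used in the proof of \emph{Lemma 8.1} (the $sp$-completion case), with $\Omega$ in place of the full algebraic closure. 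These inclusions $B_V\hookrightarrow A_U$ assemble, via the gluing with respect to the equivalence relation $R_\Sigma$, into a morphism of schemes $f_{qc}:X_{qc}\to Y_{qc}$; by \emph{Lemma 3.5} (the criterion) $X_{qc}$ and $Y_{qc}$ are quasi-galois closed over $X$ and $Y$ respectively, and by construction $\phi_X,\phi_Y$ are the natural projections, so on the level of underlying spaces $f_{qc}$ sends a point $x$ (a prime ideal $j_x\subseteq A_U$) to the point of $Y_{qc}$ determined by $j_x\cap B_V$. The compatibility $f\circ\phi_X=\phi_Y\circ f_{qc}$ then holds because restricting $j_x\cap B_V$ further to $\mathcal{O}_Y$ is the same as restricting $j_x$ to $\mathcal{O}_X$ and then applying $f$ — both describe the composite $\mathcal{O}_Y\to\mathcal{O}_X\to A_U/j_x$. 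The function-field inclusions $k(X)\subseteq k(X_{qc})$ and $k(Y)\subseteq k(Y_{qc})$ are immediate from the construction, since each $A_U\supseteq B_U$ and the fraction field of $A_U$ is a subfield of $\Omega$ containing $k(X)$.

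The main obstacle I anticipate is verifying that $f_{qc}$ is genuinely well-defined on the quotient spaces and independent of the choices of charts $U$ over $V$ — i.e.\ that the prime-ideal contractions $j_x\mapsto j_x\cap B_V$ respect the equivalence relation $R_\Sigma$ on both sides and glue consistently along overlaps. This is exactly the kind of compatibility that \emph{Lemma 3.5} is designed to guarantee (the maximal quasi-galois closed affine patching is unique), so I would lean on that uniqueness together with \emph{Lemma 2.14}-style bookkeeping from \cite{An7} rather than checking it by hand; one also needs that conjugates of $A_U$ over $B_V$ are accounted for inside $\mathcal{C}_{X_{qc}}$, which is precisely the quasi-galois closed condition. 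The remaining assertions ($\phi_X,\phi_Y$ affine and surjective) follow from \emph{Lemma 3.2} and \emph{Lemma 3.4} exactly as in \emph{Lemma 3.6}.
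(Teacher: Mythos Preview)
Your proposal is correct and follows essentially the same approach as the paper: the paper's proof consists of the single sentence ``Just repeat the procedure for a universal cover in \cite{An3} or as in the previous section \S 3.2 of the present paper,'' and what you have written is precisely a careful unpacking of that instruction---building $X_{qc}$ and $Y_{qc}$ via the \S 3.2 gluing construction inside a common $\Omega$ and reading off $f_{qc}$ from the chart-by-chart ring inclusions. Your additional remarks on well-definedness under $R_\Sigma$ and reliance on \emph{Lemma 3.5} go beyond what the paper spells out, but they are in the same spirit and do not constitute a different route.
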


Such an integral scheme $X_{qc}$ is said to be a \textbf{quasi-galois closed cover} of $X$, denoted by $(X_{qc},\phi_{X})$.

\begin{proof}
Just repeat the procedure for a universal cover in \cite{An3} or as in the previous section \S 3.2 of the present paper.
\end{proof}

\begin{lemma}
The \emph{sp}-completions $X_{sp}$ and $Y_{sp}$ are large enough for the morphisms from $X$ into $Y$. That is, there is a surjection from $Hom(X_{sp},Y_{sp})$ onto $Hom(X,Y)$.
\end{lemma}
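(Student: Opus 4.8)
The plan is to construct, for every morphism $f_{sp}:X_{sp}\to Y_{sp}$ of $sp$-completions, a morphism $f:X\to Y$ in a canonical way, and then to show that every $f\in Hom(X,Y)$ is obtained this way, i.e.\ that the assignment $f_{sp}\mapsto f$ is surjective onto $Hom(X,Y)$. The starting point is the $sp$-completion theorem (\emph{Theorem 5.9}): we have surjective affine morphisms $\lambda_{X}:X_{sp}\to X$ and $\lambda_{Y}:Y_{sp}\to Y$, with $X_{sp}$, $Y_{sp}$ quasi-galois closed over $X$, $Y$ respectively, and $k(X_{sp})=k(X)^{al}$, $k(Y_{sp})=k(Y)^{al}$. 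By \emph{Lemma 3.3} (Quotient), $\mathcal{O}_{X}\cong(\lambda_{X})_{*}(\mathcal{O}_{X_{sp}})^{Aut(X_{sp}/X)}$ and likewise for $Y$; this is the mechanism by which a morphism downstairs is reconstructed from a morphism upstairs, since $X$ (resp.\ $Y$) is literally recovered as the quotient of $X_{sp}$ (resp.\ $Y_{sp}$) by the action of $Aut(X_{sp}/X)$ (resp.\ $Aut(Y_{sp}/Y)$).

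First I would set up the descent direction. Given $f_{sp}:X_{sp}\to Y_{sp}$, define a set-map $f:X\to Y$ by $\lambda_{X}(x)\mapsto\lambda_{Y}(f_{sp}(x))$, exactly as in the second bullet of \emph{Lemma 6.1}; one checks this is well defined precisely when $f_{sp}$ carries $Aut(X_{sp}/X)$-orbits into $Aut(Y_{sp}/Y)$-orbits, and that it is a morphism of schemes by using the invariant-ring description of $\mathcal{O}_{X}$ and $\mathcal{O}_{Y}$ from \emph{Lemma 3.3} together with the fact that $\lambda_{X},\lambda_{Y}$ are affine. This produces a map $\tau':Hom(X_{sp},Y_{sp})\to Hom(X,Y)$. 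Next, for surjectivity, I would take an arbitrary $f\in Hom(X,Y)$ and lift it: since $k(Y)\subseteq k(X)\subseteq k(X)^{al}$ and $k(Y_{sp})=k(Y)^{al}$ is the algebraic closure of $k(Y)$ sitting inside an algebraic closure of $k(X)$, the composite $X_{sp}\xrightarrow{\lambda_{X}}X\xrightarrow{f}Y$ together with the universal/maximal property of the $sp$-completion of $Y$ relative to fields algebraic over $k(X)$ (\emph{Theorem 5.9}, \emph{Lemma 5.10} on uniqueness, and the graph-functor completeness of \emph{Definition 5.7}) yields a morphism $f_{sp}:X_{sp}\to Y_{sp}$ with $\lambda_{Y}\circ f_{sp}=f\circ\lambda_{X}$; feeding this $f_{sp}$ into $\tau'$ returns $f$. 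Concretely the lift is built affine-locally: an affine open $V=Spec(B)\subseteq Y$ pulls back under $f$ to affine opens of $X$, whose rings after $sp$-completion contain the normalization/integral closure data needed to house $B\otimes$(algebraic extension), and the quasi-galois closedness of $Y_{sp}$ over $Y$ forces compatibility of the local choices into a global morphism, as in the construction in \S 3.2 and in the proof of \emph{Lemma 8.1}.

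The main obstacle I expect is the well-definedness and globality of the lift $f_{sp}$: a priori $f$ only gives ring maps $B\to\mathcal{O}_{X}(f^{-1}V)$ downstairs, and extending these compatibly to the $sp$-completions requires knowing that the extension $k(X)^{al}/k(Y)$ restricts coherently over every chart, i.e.\ that the conjugates chosen in building $Y_{sp}$ (via the set $\Delta$ of generators of $k(Y)^{al}/k(Y)$ and the Galois action in the proof of \emph{Theorem 5.9}) can be matched with those in $X_{sp}$. This is exactly where quasi-galois closedness is indispensable: it guarantees the maximal affine patching is canonical, so the local lifts are forced and automatically glue, and it also furnishes the identification $Aut(Y_{sp}/Y)\cong Gal(k(Y)^{al}/k(Y))$ needed to see that $f_{sp}$ respects the group actions. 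Once the lift exists and $\lambda_{Y}\circ f_{sp}=f\circ\lambda_{X}$, surjectivity of $Hom(X_{sp},Y_{sp})\to Hom(X,Y)$ is immediate, and the remaining bookkeeping (that $\tau'$ is the inverse construction to the one in \emph{Lemma 6.1}) is routine.
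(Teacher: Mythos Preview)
Your approach is a direct constructive lifting: given $f\in Hom(X,Y)$, you build $f_{sp}:X_{sp}\to Y_{sp}$ affine-locally from the inclusion $k(Y)^{al}\subseteq k(X)^{al}$ and the chart description in \emph{Theorem 5.9}, then check $\lambda_{Y}\circ f_{sp}=f\circ\lambda_{X}$. The paper instead argues by contradiction and routes everything through \emph{Lemma 8.3}: it supposes $f$ does \emph{not} arise from any $X_{sp}\to Y_{sp}$, invokes \emph{Lemma 8.3} to write $f$ as coming from some $h_{qc}:W_{qc}\to Z_{qc}$ between quasi-galois closed covers, observes $\Gamma(X_{sp})\subseteq\Gamma(W_{qc})$, and then splits into the cases $\dim X_{sp}=\dim W_{qc}$ and $\dim X_{sp}<\dim W_{qc}$, in each case producing a morphism $X_{sp}\to Y_{sp}$ inducing $f$ and reaching a contradiction. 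So the paper's argument is indirect and leans on the auxiliary quasi-galois closed covers of \emph{Lemma 8.3} plus a dimension dichotomy, whereas yours is a one-step affine-local lift that bypasses \emph{Lemma 8.3} entirely; your version is more economical when the chart-by-chart gluing can be made precise, while the paper's detour through $W_{qc}$ has the advantage that it does not require re-verifying compatibility of local lifts, since the existence of $h_{qc}$ is already packaged in \emph{Lemma 8.3}.

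One point to tighten in your write-up: the phrase ``universal/maximal property of the $sp$-completion'' is not established as a mapping property anywhere in the paper (\emph{Definition 5.7} is a maximality condition on the graph $\Gamma$, not a universal arrow), so the existence of your global lift $f_{sp}$ really rests on the explicit affine patching in the proof of \emph{Theorem 5.9} together with the uniqueness in \emph{Lemma 5.10}; you should spell out that the rings $A_{V}=B_{V}[\Delta_{V}]$ on the $Y$-side embed into the corresponding rings on the $X$-side and that the gluing relation $R_{\Sigma}$ is respected, rather than invoking an unstated universal property.
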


\begin{proof}
Let $f\in Hom(X,Y)$.
Hypothesize that $f$ does not arise from any morphism $f_{sp}:X_{sp}\to Y_{sp}$ in a canonical manner.

Suppose that $f$ arises from a morphism $$h_{qc}:W_{qc}\to Z_{qc}$$ between quasi-galois closed covers, where $h_{qc}$ is given in a canonical manner by \emph{Lemma 8.3}.

Consider the graphs of schemes (see \emph{\S 5}). It is seen that $\Gamma(X_{sp})$ must be contained in $\Gamma(W_{qc})$.

There are two cases.

\emph{Case (i)}: Assume $\dim X_{sp}=\dim W_{qc}$.

Assume $W_{qc}={(W_{qc})}_{sp}$ without loss of generality.

The function fields $k(X_{sp})$ and $k(W_{qc})$ are two algebraic closure of the field $k(X)$ and hence are isomorphic over $k(X)$.

It is seen that $W_{qc}$ and $ X_{sp}$ are isomorphic schemes over $X$ by the construction for \emph{sp-}completion.
Hence, $f$ arises from $h:X_{sp}\to Y_{sp}$, where there will be in contradiction.

\emph{Case (ii)}: Suppose $\dim X_{sp}<\dim W_{qc}$.

First consider the commutative diagrams which are all given in a canonical manner:

$$h_{qc}\circ \lambda_{W_{qc}}=\lambda_{Z_{qc}}\circ {(h_{qc})}_{sp}: {(W_{qc})}_{sp}\to Z_{qc};$$
$$f\circ \phi_{X}=\phi_{Y}\circ h_{qc}:W_{qc}\to Y.$$

Then consider the commutative diagrams which are all given in a canonical manner:

$$h \circ \lambda_{X_{sp}}=\phi_{Y_{sp}}\circ {(h_{qc})}_{sp}: {(W_{qc})}_{sp}\to Y_{sp};$$
$$f\circ \lambda_{X}=\lambda_{Y}\circ h:X_{sp}\to Y,$$
where $h:X_{sp}\to Y_{sp}$ is uniquely defined in a canonical manner.

It follows that $f$ arises from $h$, where there will be in contradiction.

This completes the proof.
\end{proof}

\subsection{Proofs of the main theorems}

Now we can give the proofs of the  main theorems in the paper.

\begin{proof}
\textbf{(Proof of Theorem 1.1)} By \emph{Lemmas 8.1-4} it is seen that there is a surjection $$t: Hom(\pi_{1}^{et}(k(X)),\pi_{1}^{et}(k(Y))\to Hom(X,Y).$$

Let $\pi$ be the projection from the set $$Hom(\pi_{1}^{et}(k(X)),\pi_{1}^{et}(k(Y))$$ onto the sets $$Hom_{\pi _{1}^{et}(k(X)),\pi _{1}^{et}(k(Y))
}^{out}\left( \pi _{1}^{et}\left( X\right) ,\pi _{1}^{et}\left( Y\right)
\right)$$  given by $f \mapsto [f]$.

From the maps $t$ and $\pi$, we have a map $$\xi: Hom_{\pi _{1}^{et}(k(X)),\pi _{1}^{et}(k(Y))
}^{out}\left( \pi _{1}^{et}\left( X\right) ,\pi _{1}^{et}\left( Y\right)
\right)\to Hom(X,Y)$$ given by $$[f]\mapsto t(f).$$

It is clear that $\xi$ is a surjection of sets.

In the following we prove that $\xi$ is an injection.

In fact, according to the properties of quasi-galois closed schemes (see \emph{Lemmas 3.2-4}),
we have $$Gal(k(X_{sp})/k(X))\cong Aut(X_{sp}/X);$$ $$Gal(k(Y_{sp})/k(Y))\cong Aut(Y_{sp}/Y);$$ $$\pi_{1}^{et}(X)\cong Gal(k(X_{\Omega _{et}})/k(X))\cong Aut(X_{\Omega _{et}}/X);$$  $$\pi_{1}^{et}(Y)\cong Gal(k(Y_{\Omega _{et}})/k(Y))\cong Aut(Y_{\Omega _{et}}/Y).$$

By \emph{Lemma 4.3} and \emph{Lemma 6.1}, it is seen that there are the following monodromy actions:

For the scheme $X$, we have
\begin{itemize}
\item the monodromy action of $Aut(X_{\Omega_{et}}/X)$  on the universal cover $X_{\Omega_{et}}$;

\item the monodromy action of $Aut(X_{sp}/X)$ on the \emph{sp}-completion $X_{sp}$.
\end{itemize}

For the scheme $Y$, we have
\begin{itemize}
\item the monodromy action of $Aut(Y_{\Omega_{et}}/Y)$  on the universal cover $Y_{\Omega_{et}}$;

\item the monodromy action of $Aut(Y_{sp}/Y)$ on the \emph{sp}-completion $Y_{sp}$.
\end{itemize}

It is seen  that there is a
 bijection $$ Hom(\frac{Aut(X_{sp}/X)}{Aut(X_{\Omega _{et}}/X)},\frac{Aut(Y_{sp}/Y)}{Aut(Y_{\Omega _{et}}/Y)})\to Hom(X,Y)$$ between sets.

 In deed, take any $f\in Hom(X,Y)$. There is an $f_{sp}\in Hom(X_{sp},Y_{sp})$ which produces $f$ in a canonical manner. Then $f_{sp}$ produces canonically an $f_{et}\in Hom(X_{\Omega_{et}}, Y_{\Omega_{et}})$. It follows that all elements of $Hom(X,Y)$ arise from the elements of $f_{et}\in Hom(X_{\Omega_{et}}, Y_{\Omega_{et}})$.

 On the other hand, different elements of the set $$Hom(\frac{Aut(X_{sp}/X)}{Aut(X_{\Omega _{et}}/X)},\frac{Aut(Y_{sp}/Y)}{Aut(Y_{\Omega _{et}}/Y)})$$ produce different elements of the set $$Hom(X_{\Omega_{et}}, Y_{\Omega_{et}})$$ and then different elements of $$Hom(X,Y)$$ in a canonical manner, respectively, by the monodromy actions.

Hence, $\xi$ is a bijection.
This completes the proof.
\end{proof}

\begin{proof}
\textbf{(Proof of Theorem 1.2)}
It is immediate from  \emph{Theorem 1.1}.
\end{proof}

\begin{proof}
\textbf{(Proofs of Theorem 1.3-4)}
It is immediate from  the following fact that
$$G(k(X))^{un}\cong \pi^{et}_{1}(X)$$ holds for any arithmetic variety $X$ (see \emph{Lemma 4.1}).
\end{proof}

\begin{remark}
In the above we indeed have proved that there is a bijection $$Hom(X,Y) \cong Hom(\frac{Aut(X_{sp}/X)}{Aut(X_{\Omega _{et}}/X)},\frac{Aut(Y_{sp}/Y)}{Aut(Y_{\Omega _{et}}/Y)})$$ between sets. This is the key point of the section conjecture.
\end{remark}

\newpage

\end{document}